\definecolor{plum}{rgb}{.5,0,1}
\theoremstyle{plain}
\newtheorem{theorem}{Theorem}
\newtheorem{lemma}{Lemma}
\newtheorem{prop}{Proposition}
\newtheorem{question}{Question}
\theoremstyle{definition}
\newtheorem{definition}{Definition}
\theoremstyle{remark}
\newtheorem*{remark}{Remark}
\newcommand{\om}{\omega}
\newcommand{\dist}{\textup{dist}}
\newcommand{\Z}{\mathbb{Z}}
\newcommand{\R}{\mathbb{R}}
\newcommand{\e}{\epsilon}
\newcommand{\sgn}{\mathrm{sgn}}
\newcommand{\wind}{\mathrm{wind}}
\newcommand{\im}{\mathrm{Im}}
\newcommand{\Lip}{\mathrm{Lip}}
\newcommand{\de}{\delta}
\newcommand{\wt}{\widetilde}
\title{On Kakeya maps with regularity assumptions}
\author{Yuqiu Fu}
\email{yuqiufu@mit.edu}
\address{Deparment of Mathematics, MIT, Cambridge, MA 02139}
\author{Shengwen Gan}
\email{shengwen@mit.edu}
\address{Deparment of Mathematics, MIT, Cambridge, MA 02139}
\keywords{Kakeya set, winding number}
\subjclass[2010]{42B20}
\begin{document}

\maketitle
\begin{abstract}
In $\mathbb R^n$, we parametrize Kakeya sets using Kakeya maps. A Kakeya map is defined to be a map 
$$\phi:B^{n-1}(0,1)\times [0,1]\rightarrow \R^{n},\qquad (v,t)\mapsto (c(v)+tv,t),$$
where $ c:B^{n-1}(0,1)\rightarrow \R^{n-1}$. 
The associated {Kakeya set} is defined to be $ K:=\im (\phi). $

We show that the Kakeya set $K$ has positive measure if either one of the following conditions is true.

\noindent
(1) $c$ is continuous and $c|_{S^{n-2}}\in C^\alpha(S^{n-2})$ for some $\alpha>\frac{(n-2)n}{(n-1)^2}$,

\noindent
(2) $c$ is continuous and $c|_{S^{n-2}}\in W^{1,p}(S^{n-2})$ for some $p>n-2$. 
    
\end{abstract}

\section{Introduction}
     The Kakeya set conjecture says
if a set $ K\subset \R^n $ contains a unit line segment in each direction (such a set is called a {Kakeya set}), then $ K $ has Hausdorff dimension $ n $.
% For a detailed introduction to this conjecture, see \cite{wolff2003lectures}. 
The $n=2$ case is solved by Davies \cite{davies1971some}, so we shall restrict ourselves to $n\geq 3.$
Although we cannot solve the full conjecture, we can prove some positive results by assuming some regularity on the Kakeya set.

We start by defining the Kakeya map.
% which records the positions of every line segments in a Kakeya set. 
Notation-wise all the balls are closed balls. For example, by $B^{n-1}(0,1)$ we mean the closed unit ball in $\R^{n-1}$.

% \begin{definition}[Kakeya map]
%     For any function $ c:S^{n-1}\rightarrow \R^n $, we let $ \phi_c$ be the map 
%     $$\phi_c:S^{n-1}\times [0,1]\rightarrow \R^{n},\qquad (v,t)\mapsto c(v)+tv.$$
%     We call $ \phi_c $ a Kakeya map, and $K_c:=\im(\phi_c)=\bigcup_{v\in S^{n-1}}c(v)+[0,1]\cdot v$ the associated {{Kakeya set}}. 
% \end{definition}
%     Intuitively, we can think of $c$ as a ``direction-to-position" map, since for each direction $v$, $ K_c $ contains a $v$-direction unit line segment starting from position $c(v)$.
%     The Kakeya set conjecture can be then rephrased as for any Kakeya map $\phi$, its image $\im(\phi)$ has full Hausdorff dimension.

\begin{definition} [Kakeya map] \label{kakeyamaplocal}
    Given a direction map $ c:B^{n-1}(0,1)\rightarrow \R^{n-1} $, we define the associated {{Kakeya map}} to be the map 
   \begin{equation}\label{defphi}
       \phi:B^{n-1}(0,1)\times [0,1]\rightarrow \R^{n},\qquad (v,t)\mapsto (c(v)+tv,t).
   \end{equation}
    We define the associated {Kakeya set} to be $ K:=\im (\phi). $
\end{definition}

\begin{remark}
By construction, for any $v\in B^{n-1}(0,1)$, $K$ contains a line segment in direction $(v,1)$. Actually, this line segment in direction $(v,1)$ has one of its endpoint at $(c(v),0)$. This is the reason that we call $c$ the direction map. Sometimes, it is good to write $\phi=\phi_c$ to highlight the dependence on $c$, but we just omit the subscript and write as $\phi$ since $c$ is always priorly fixed and there is no ambiguity.
\end{remark}

Let us talk about the regularity assumptions that we will impose on $\phi$. First of all, we assume $c$ is a continuous map. Second, we assume $c|_{S^{n-2}}$ (which is the restriction of $c$ to $S^{n-2}$) lies in some function spaces of high regularity, for example, $C^\alpha(S^{n-2}), W^{1,p}(S^{n-2})$.

We state our main results.
\begin{theorem}\label{holderthm}
  If $c:B^{n-1}(0,1) \rightarrow \R^{n-1}$ is continuous and $c|_{S^{n-2}}$ is $\alpha$-H\"{o}lder continuous for some $\alpha>\frac{(n-2)n}{(n-1)^2},$ then $\im(\phi)$ has positive Lebesgue measure.
\end{theorem}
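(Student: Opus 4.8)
The plan is to pass to a smooth model by regularization and then produce a lower bound for the measure of the image that survives in the limit. Throughout, $A\lesssim B$ means $A\le CB$ with $C=C(n,\alpha)$, and $C_0$ denotes the $\alpha$-H\"older constant of $c$. First I would mollify: $c_\epsilon:=c*\rho_\epsilon$, so that $\|c-c_\epsilon\|_{L^\infty}\lesssim C_0\epsilon^{\alpha}$ and $\|D^kc_\epsilon\|_{L^\infty}\lesssim C_0\epsilon^{\alpha-k}$ for all $k\ge1$. (Alternatively, take $c_\epsilon$ a polynomial of degree $\sim\epsilon^{-1}$; a Bernstein--Markov estimate gives the same bounds while making $\phi_{c_\epsilon}$ a polynomial map, hence of finite multiplicity at each fixed $\epsilon$.) The maps $\phi_{c_\epsilon}$ are smooth and converge uniformly to $\phi_c$ on the compact domain $\overline{B^{n-1}(0,1)}\times[0,1]$, so $\im(\phi_{c_\epsilon})$ lies in the $O(\epsilon^\alpha)$-neighborhood of the compact set $\im(\phi_c)$; it therefore suffices to show $\liminf_{\epsilon\to0}|\im(\phi_{c_\epsilon})|>0$.

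Next I would trade ``measure of the image'' for ``absolute continuity of a push-forward''. Let $\mu_\epsilon:=(\phi_{c_\epsilon})_*(\text{Lebesgue measure on }\overline{B^{n-1}(0,1)}\times[0,1])$, a probability measure supported on $\im(\phi_{c_\epsilon})$; along a subsequence $\mu_\epsilon\to\mu:=(\phi_c)_*(\text{Lebesgue})$ weakly-$*$, with $\mathrm{supp}\,\mu\subseteq\im(\phi_c)$. Since absolute continuity of $\mu$ forces $\im(\phi_c)$ to have positive measure, it is enough to bound the densities $f_\epsilon:=d\mu_\epsilon/dx$ in $L^p(\R^n)$ for some fixed $p>1$, uniformly in $\epsilon$ (bounded subsets of $L^p$ are weakly compact, and the weak limit of $f_\epsilon\,dx$ represents $\mu$). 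By the change of variables formula, $f_\epsilon(x,s)=\sum_{v:\,c_\epsilon(v)+sv=x}|\det(Dc_\epsilon(v)+sI)|^{-1}$ for $(x,s)\in\R^{n-1}\times[0,1]$, so slicing in $s$,
\[
\|f_\epsilon\|_{L^p(\R^n)}^p=\int_0^1\|W_{s,\epsilon}\|_{L^p(\R^{n-1})}^p\,ds,
\]
where $W_{s,\epsilon}$ is the push-forward density of Lebesgue measure on $B^{n-1}(0,1)$ by $\psi_{s,\epsilon}:v\mapsto c_\epsilon(v)+sv$.

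The heart of the matter is to bound $\int_0^1\|W_{s,\epsilon}\|_{L^p}^p\,ds$ uniformly in $\epsilon$ for some $p>1$. Two facts are available: for each $v$ the function $s\mapsto\det(Dc_\epsilon(v)+sI)$ is a monic polynomial of degree $n-1$, hence $\int_0^1|\det(Dc_\epsilon(v)+sI)|\,ds\ge\tau_n>0$ uniformly (a monic polynomial of bounded degree has $L^1([0,1])$-norm bounded below); and $\|D^2\psi_{s,\epsilon}\|=\|D^2c_\epsilon\|\lesssim\epsilon^{\alpha-2}$ controls how fast this Jacobian varies. I would decompose the domain dyadically according to $|\det(Dc_\epsilon(v)+sI)|\sim\lambda$: where the Jacobian is $\sim\lambda$ the map contracts volumes by $\sim\lambda$, but the second-derivative bound confines that region to a thin neighborhood of the codimension-one caustic $\{\det(Dc_\epsilon(\cdot)+sI)=0\}$, so the reciprocal-Jacobian mass it carries is small; summing these pieces against the ``bulk'' bound $W_{s,\epsilon}\lesssim s^{-(n-1)}$ on the set of generic multiplicity and integrating in $s$, the estimate should close uniformly in $\epsilon$ precisely when $\alpha>1-\frac{1}{(n-1)^2}=\frac{(n-2)n}{(n-1)^2}$, which is exactly the hypothesis.

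The main obstacle is this last step, and the difficulty is genuine: $\psi_{s,\epsilon}$ has Lipschitz constant $\sim C_0\epsilon^{\alpha-1}\to\infty$, so a priori it may fold over itself arbitrarily often at every scale, and there is no scaling reduction to small H\"older constant (dilating to shrink $C_0$ forces the direction ball to shrink compensatingly). The ``honest'' reason a lower bound holds is topological: if $\|c\|_{C^0}$ is small, then for $s$ near $1$ the straight-line homotopy deforming $c_\epsilon$ to $0$ on the sphere shows $\deg(\psi_{s,\epsilon},B^{n-1}(0,1),y)=1$ for all $|y|<s-\|c_\epsilon\|_{C^0}$, hence $|\psi_{s,\epsilon}(B^{n-1}(0,1))|\gtrsim1$; localizing this to balls of radius just above the collision scale $(C_0/s)^{1/(1-\alpha)}$ dispatches moderate $C_0$. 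Carrying this through for arbitrary $C_0$ is what requires the quantitative caustic analysis above, and it is the arithmetic of that estimate — balancing the gain $\epsilon^{\alpha-2}$ from the second derivative against the loss from reciprocal Jacobians on an $(n-1)\times(n-1)$ matrix — that I expect to produce the threshold $\alpha>\frac{(n-2)n}{(n-1)^2}$.
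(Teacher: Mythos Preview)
Your reduction to a uniform $L^p$ bound on the push-forward densities $f_\epsilon$ is correct in principle, but the argument you sketch for that bound does not go through, and the proposal itself concedes this (``the main obstacle is this last step'', ``I expect to produce the threshold''). Concretely: the ``bulk'' bound $W_{s,\epsilon}\lesssim s^{-(n-1)}$ presupposes that $D\psi_{s,\epsilon}=Dc_\epsilon+sI$ is dominated by $sI$, but $\|Dc_\epsilon\|_\infty\sim\epsilon^{\alpha-1}\to\infty$, so this is false for all small $\epsilon$. More seriously, $W_{s,\epsilon}(y)=\sum_{\psi_{s,\epsilon}(v)=y}|\det D\psi_{s,\epsilon}(v)|^{-1}$ involves an \emph{unsigned} multiplicity that your second-derivative and Jacobian-level-set estimates do not control: a map with $\|D^2\psi\|\lesssim\epsilon^{\alpha-2}$ can still fold an unbounded number of times, and passing to polynomial approximants only gives a Bezout bound of order $\epsilon^{-(n-1)}$, which is far too large to close the estimate. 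Nothing in the dyadic decomposition you describe bounds the number of sheets, so there is no path from the stated ingredients to a uniform $L^p$ estimate.

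The paper's proof avoids exactly this multiplicity issue by working with the \emph{signed} winding number of the boundary map $\gamma_{t,\epsilon}=\underline{c}_\epsilon+t\,\mathrm{id}:S^{n-2}\to\R^{n-1}$ rather than the unsigned push-forward density. The signed volume $\int\mathrm{wind}_{t,\epsilon}(x)\,dx$ is, by Stokes, a monic polynomial of degree $n-1$ in $t$, yielding the uniform lower bound $\int_0^1|\int\mathrm{wind}_{t,\epsilon}|\,dt\gtrsim1$ (this is the analogue of your ``$\int_0^1|\det(Dc_\epsilon(v)+sI)|\,ds\ge\tau_n$'', but integrated in $v$ first so that cancellation can occur). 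One then argues by contradiction: if $|\im\phi_c|=0$, then $\mathrm{wind}_t\equiv0$ a.e., so the mollified winding number is supported in $\mathcal{N}_{\epsilon^\alpha}(\im\gamma_t)$; H\"older's inequality together with the isoperimetric inequality $\|\mathrm{wind}_{t,\epsilon}\|_{L^{(n-1)/(n-2)}}\lesssim A(\gamma_{t,\epsilon})\lesssim\epsilon^{(\alpha-1)(n-2)}$ and the covering bound $|\mathcal{N}_{\epsilon^\alpha}(\im\gamma_t)|\lesssim\epsilon^{(n-1)\alpha-(n-2)}$ then forces $1\lesssim\epsilon^{(\alpha-1)(n-2)+\alpha-\frac{n-2}{n-1}}$, a contradiction precisely when $\alpha>\frac{(n-2)n}{(n-1)^2}$. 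The key point is that the isoperimetric inequality controls $\mathrm{wind}_{t,\epsilon}$ via the \emph{area} of the boundary image, bypassing any need for a multiplicity bound.
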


\begin{theorem}\label{sobolevthm}
  If $c:B^{n-1}(0,1) \rightarrow \R^{n-1}$ is continuous and $c|_{S^{n-2}}$ lies in $W^{1,p}(S^{n-2})$ for some $p>n-2,$ then $\im(\phi)$ has positive Lebesgue measure.
\end{theorem}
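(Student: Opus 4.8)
The plan is to slice $\im(\phi_c)$ by its last coordinate and so reduce to a one-parameter family of maps on $\overline{B^{n-1}}:=\overline{B^{n-1}(0,1)}$. Because the $n$-th coordinate of $\phi_c(v,t)$ is exactly $t$, the horizontal slice $\{x'\in\R^{n-1}:(x',t)\in\im(\phi_c)\}$ equals $F_t(\overline{B^{n-1}})$ for $t\in[0,1]$ and is empty otherwise, where $F_t:=c+t\cdot\id:\overline{B^{n-1}}\to\R^{n-1}$. Since $\phi_c$ is continuous on a compact set, $\im(\phi_c)$ is compact, so Fubini's theorem gives $|\im(\phi_c)|=\int_0^1|F_t(\overline{B^{n-1}})|\,dt$; hence it suffices to show $|F_t(\overline{B^{n-1}})|>0$ for a.e.\ $t\in[0,1]$ (a positive-measure set of such $t$ would already do).

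The hypothesis enters through the fact that $s>(n-1)/2$ is exactly the threshold beyond which $H^s$ on $\R^{n-1}$ embeds into a first-order Sobolev space with integrability exponent above the dimension: writing $\frac1q=\frac12-\frac{s-1}{n-1}$, one has $\nabla c\in H^{s-1}(\overline{B^{n-1}})\hookrightarrow L^q(\overline{B^{n-1}})$ with $q>n-1$ (and if $s\ge(n+1)/2$ one simply gets $c\in W^{1,\infty}$ and may take any finite $q>n-1$). Thus $c\in W^{1,q}(\overline{B^{n-1}};\R^{n-1})$ with $q$ strictly larger than the domain dimension $n-1$, and so is each $F_t$, with weak ($=$ approximate) derivative $DF_t(v)=Dc(v)+tI$ a.e. For Sobolev maps in $W^{1,q}$ with $q$ above the dimension the area (change-of-variables) formula holds, so for every $t$
$$\int_{\overline{B^{n-1}}}\bigl|\det\bigl(Dc(v)+tI\bigr)\bigr|\,dv=\int_{\R^{n-1}}\#\,F_t^{-1}(y)\,dy ;$$
in particular $|F_t(\overline{B^{n-1}})|>0$ as soon as $\det(Dc(v)+tI)\neq0$ on a set of $v$ of positive measure.

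It remains to check this non-degeneracy for a.e.\ $t$. For each fixed $v$, the map $t\mapsto\det(Dc(v)+tI)$ is a monic polynomial in $t$ of degree $n-1$, hence vanishes for only finitely many $t$, so $\int_0^1\mathbf{1}[\det(Dc(v)+tI)=0]\,dt=0$. As $Dc$ is a measurable $(n-1)\times(n-1)$ matrix field, Fubini--Tonelli shows $\{(v,t):\det(Dc(v)+tI)=0\}$ is Lebesgue-null in $\overline{B^{n-1}}\times[0,1]$, and therefore for a.e.\ $t$ the slice $\{v:\det(Dc(v)+tI)=0\}$ is null in $\overline{B^{n-1}}$. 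For such $t$ the integral above is strictly positive, so $|F_t(\overline{B^{n-1}})|>0$; integrating over $t\in[0,1]$ yields $|\im(\phi_c)|>0$.

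I expect the main obstacle to be the use of the area formula for the non-smooth maps $F_t$: one must know that $F_t$ is approximately differentiable a.e.\ with the expected derivative and satisfies Lusin's condition (N), which is precisely what the supercritical exponent $q>n-1$ — equivalently $s>(n-1)/2$ — buys. Concretely I would either cite the classical change-of-variables theorem for $W^{1,q}$ maps with $q$ above the dimension, or, noting that only the inequality $\int_{\overline{B^{n-1}}}|\det DF_t|\le\int_{\R^{n-1}}\#F_t^{-1}(y)\,dy$ is actually needed above, derive that inequality from a Lusin decomposition of $F_t$ into countably many Lipschitz pieces and the classical area formula applied on each piece. (Continuity of $c$, which is assumed and is in any case automatic from $W^{1,q}$ with $q>n-1$, is used only to guarantee that $\im(\phi_c)$ is measurable.)
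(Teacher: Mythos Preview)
Your argument is correct and is a genuinely different (and more direct) route than the paper's.  The paper never applies an area formula to the full slice maps $F_t=c+t\,\id$ on $\overline{B^{n-1}}$.  Instead it restricts to the boundary sphere $S^{n-2}$, considers the closed curves/hypersurfaces $\gamma_t=\underline{c}+t\,\id:S^{n-2}\to\R^{n-1}$ and their mollifications $\gamma_{t,\e}$, and works with the \emph{signed volume} $\int_{\R^{n-1}}\wind_{t,\e}(x)\,dx$.  The key input there is topological: a degree estimate of Nguyen shows $\int|\wind_t|\,dx$ is finite (this uses the Sobolev--Slobodeckij norm and the embedding $H^{(n-2)/2+\delta}(S^{n-2})\hookrightarrow W^{\frac{n-2}{n-1}+\delta/2,\,n-1-\delta/4}$), and then the isoperimetric inequality (Lemma~\ref{lem2}) controls $\|\wind_{t,\e}\|_{L^{(n-1)/(n-2)}}$ by $A(\gamma_{t,\e})$ to pass to the limit $\e\to0$ in Proposition~\ref{deglowerbnd}.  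Your approach replaces all of this by the single observation that $H^{s}$ with $s>(n-1)/2$ embeds into $W^{1,q}(\overline{B^{n-1}})$ for some $q>n-1$, so the area formula (or just the inequality $\int|\det DF_t|\le\int\#F_t^{-1}(y)\,dy$, which needs only approximate differentiability and a Lipschitz Lusin decomposition) applies to $F_t$ directly; the polynomial $t\mapsto\det(Dc(v)+tI)$ then plays the same role that the polynomial $t\mapsto\int\wind_{t,\e}$ plays in the paper.

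What each approach buys: yours is shorter, uses only standard Sobolev embedding and the change-of-variables theorem for supercritical $W^{1,q}$ maps, and makes the threshold $s>(n-1)/2$ transparent as exactly the point where $q>n-1$.  The paper's boundary/winding-number framework is heavier here but is the same machinery that proves the H\"older result (Theorem~\ref{holderthm}) for exponents $\alpha>\frac{(n-2)n}{(n-1)^2}$ well below $1$, a regime where no $W^{1,q}$ regularity is available and your area-formula argument would not apply; so the paper's method trades simplicity in the Sobolev case for a uniform treatment of both theorems.
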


\begin{remark}
In a previous version of Theorem \ref{sobolevthm}, the regularity assumption was $c\in H^s(B^{n-1}(0,1))$ for some $s>(n-1)/2$. However, the definition of fractional Sobolev space on bounded domains as well as on manifolds is quite tricky and is not the main purpose of this paper, so we switch to a less tricky space $W^{1,p}(S^{n-2})$, which is defined by pulling back to the Euclidean space. To define $W^{1,p}(S^{n-2})$, we first choose two charts $\{U_1,U_2\}$ to cover $S^{n-2}$. Let $\psi_i:U_i\rightarrow B^{n-2}(0,1)$ be diffeomorphisms. For $f$ being a function on $S^{n-2}$, we define the norm:
\begin{equation}
    \|f\|_{W^{1,p}(S^{n-2})}:=\sum_{i=1}^2\|f\circ\psi_i^{-1}\|_{W^{1,p}(B^{n-2}(0,1))}.
\end{equation}
It is not hard to check by the chain rule that for different choices of the charts, the norms defined as above are comparable.
\end{remark}

The proofs will largely rely on the winding number from topology. In the rest of this section, we briefly discuss the winding number and its properties.
% let us talk about some notions from algebraic topology.
% Here we use two notions from algebraic topology, namely {\em{degree}} and {\em{winding number}}. In our paper, we only use the most fundamental form of them. 
\subsection{Winding number}
We first set up some notation. Given a continuous function
$$ f:S^{n}\rightarrow S^{n}, $$
we use $\deg f$ to denote the degree of $f.$
There are many ways to define the degree of a function, which all turn out to be equivalent. In \cite{hatcher2002algebraic} Section 2.2, the degree of $f$ is defined to be the integer $d$ such that the induced homomorphism 
$$ f_*:H_n(S^n)\rightarrow H_n(S^n) $$
satisfies $f_*(\alpha)=d\alpha$
(noting that $H_n(S^n)\approx\Z$).

We also note that $S^n$ and $\R^{n+1}\setminus \{0\}$ are homotopically equivalent, so $ H_n(S^n)\approx H_n(\R^n\setminus \{0\})$ are isomorphic.
Therefore, we can define the degree of 
$$f:S^n\rightarrow \R^{n+1}\setminus\{0\}$$
using homology groups in the same way. In this case, we usually call it the winding number of $f$ at $0$, denoted by $\wind(f,0)$. We can replace $0$ by any other point $x$ and define $\wind(f,x)$ as well.

We remark that we can also define the winding number in the following way.
Suppose we are given a continuous function
$$ g: S^{n-1}\rightarrow \R^n, $$
then for $z\in \R^n\setminus \im(g)$ we define
$$ g_z: S^{n-1}\rightarrow S^{n-1}, \quad z\mapsto \frac{g(x)-z}{|g(x)-z|}.$$
The winding number of $g$ at $z$ can be defined equivalently as
$$ \wind(g,z):=\deg g_z.$$
Morally speaking, $\wind(g,z)$ is the number of times that the ``hypersurface" $g(S^{n-1})$ wraps around $z$.

\medskip

Next, we interpret the winding number of $f$ from an analytic point of view. More precisely, we have the following lemma.

\begin{figure}
    \centering
    
\begin{tikzpicture}[scale=1.6]
\node (A) at (0,0) {$H_{n+1}(B^{n+1},B^{n+1}\setminus\{x_i\})$};
\node (B) at (3,1) {$H_{n+1}(B_i^{n+1},B^{n+1}_i\setminus \{x_i\})$};
\node (C) at (6,1) {$H_{n+1}(\R^{n+1},\R^{n+1}\setminus\{0\})$};
\node (D) at (3,0) {$H_{n+1}(B^{n+1},B^{n+1}\setminus \wt f^{-1}(0))$};
\node (E) at (6,0) {$H_{n+1}(\R^{n+1},\R^{n+1}\setminus\{0\})$};
\node (F) at (3,-1) {$H_n(S^n)$};
\node (G) at (6,-1) {$H_n(S^n)$};

\path[->,font=\scriptsize]
(B) edge node[above]{$\approx$} (A)
(B) edge node[above]{$\wt f_*$} (C)
(B) edge node[right]{$k_i$} (D)
(C) edge node[right]{$\approx$} (E)
(D) edge node[above]{$\wt f_*$} (E)
(D) edge node[above]{$p_i$} (A)
(F) edge node[above]{$\approx$} (A)
(F) edge node[right]{$j$} (D)
(F) edge node[above]{$\wt f_*$} (G)
(G) edge node[right]{$\approx$} (E);
\end{tikzpicture}

    \caption{}
    \label{diagram}
\end{figure}

\begin{lemma}\label{lemwind}
Assume 
$ f:S^{n}\rightarrow \R^{n+1}\setminus\{0\} $
is smooth. Consider another smooth function 
$$ \wt f: B^{n+1}(0,1)\rightarrow \R^{n+1} $$
satisfying $\wt f|_{S^n}=f$. Suppose $0\in\R^{n+1}$ is a regular value of $\wt f$ in the sense that $D\wt f(x)$ is nonsingular for any $x\in \wt f^{-1}(0)$. Then we have
\begin{equation}\label{degfsgn}
    \wind(f,0)=\sum_{x\in\wt f^{-1}(0)}\sgn(x).
\end{equation} 
Here, $\sgn(x)=1$ if $\det( D\wt f(x))>0$ and $=-1$ if $\det( D\wt f(x))<0$.
\end{lemma} 

One noticeable thing according to this lemma is that the right hand side of \eqref{degfsgn} only depends on the value of $\wt f$ on $S^{n}$. This lemma is fundamental from the point of view of algebraic topology, but we still provide the proof. 

\begin{proof}
Write $\wt f^{-1}(0)=\{x_1,\dots,x_m\}$. This is a finite set because $0$ is a regular value of $\wt f$. Of course, it could also be an empty set. We can find a small number $\de>0$ so that $\{B^{n+1}(x_i,\de)\}_{i=1}^m$ are disjoint and each $\wt f|_{B^{n+1}(x_i,\de)}$ is a diffeomorphism onto some neighborhood of $0$. For simplicity, we denote $B^{n+1}(0,1)$ by $B^{n+1}$ and denote $B^{n+1}(x_i,\de)$ by $B_i^{n+1}$.

Note that $\wt f$ induces a commutative diagram as in Figure \ref{diagram}. This is essentially the same as the diagram in \cite{hatcher2002algebraic} page 136.
We explain what those arrows mean. The arrows with $\approx$ mean that the relative homology groups are isomorphic. $k_i$ and $p_i$ are induced by inclusions. 
The top two groups are isomorphic to $\Z$, and the top homomorphism
$$ H_n(B_i^{n+1},B_i^{n+1}\setminus\{x_i\})\xrightarrow{\wt f_*}H_n(\R^{n+1},\R^{n+1}\setminus\{0\}) $$
becomes the multiplication by an integer called the \textit{local degree} of $\wt f$ at $x_i$, written $\deg \wt f|_{x_i}$. Since $\wt f|_{B^{n+1}_i}$ is a diffeomorphism, we have $\deg \wt f|_{x_i}=\sgn(x_i)$.
For the definition of homomorphism $j$, we first consider the inclusion
$$ (B^{n+1},S^n)\hookrightarrow (B^{n+1},B^{n+1}\setminus\wt f^{-1}(0)), $$
which induces the homomorphism
$$ H_{n+1}(B^{n+1},S^n)\rightarrow H_{n+1}(B^{n+1},B^{n+1}\setminus\wt f^{-1}(0)). $$
Since $H_{n+1}(B^{n+1},S^n)\approx H_n(S^n)$, we denote the above map by
$$ j: H_n(S^n)\rightarrow H_{n+1}(B^{n+1},B^{n+1}\setminus \wt f^{-1}(0)). $$
In the bottom homomorphism 
$$ H_n(S^n)\xrightarrow{\wt f_*}H_n(S^n), $$
it is a multiplication by an integer and this integer is exactly $\deg \wt f|_{S^n}$ which also equals the winding number $\wind(f,0)$ .
Similar to \cite{hatcher2002algebraic} Proposition 2.30, we can show that
$$ \wind(f,0)=\deg \wt f|_{S^n}=\sum_{i=1}^m \deg \wt f|_{x_i}=\sum_{i=1}^m\sgn(x_i). $$
This finishes the proof.
\end{proof}

\medskip

The way we connect the winding number with the Kakeya problem is through the following lemmas.

\begin{lemma}\label{lem1}
Given a continuous map $ g:B^n(0,1)\rightarrow \R^n,$ let $g|_{S^{n-1}}$ be the restriction of $g$ to $S^{n-1}$. For any $ z\notin g(S^{n-1}) $, we have $\wind(g|_{S^{n-1}},z)\neq 0 $ implies $ z\in \im(g). $
\end{lemma}

Lemma \ref{lem1} is a direct corollary of Lemma \ref{lemwind}. 
The next lemma is known as the isoperimetric inequality.
\begin{lemma}\label{lem2}
Given a smooth map $ g:S^{n-1}\rightarrow \R^n,$ we have
\begin{equation}\label{isoperimetricinequality}
\left( \int_{\R^n\setminus \im(g)} |\wind(g,z)|^{\frac{n}{n-1}} dz \right)^{\frac{n-1}{n}}\lesssim A(g). 
\end{equation}
Here, $A(g)=\int_{S^{n-1}}|\det(\sqrt{Dg^*(\omega)Dg(\omega)})|d\omega$ is the area of the self-intersecting ``hypersurface" $\im(g)$. $Dg^*$ denotes the transpose of $Dg$. Locally, $|\det(\sqrt{Dg^*(\omega)Dg(\omega)})|d\omega$ is the volume form on $\im(g)$.
\end{lemma}

Lemma \ref{lem2} can be found in an equivalent form as equation (2.10) of \cite{osserman1978isoperimetric}. We note that $z\mapsto\wind (g,z)$ is constant on each connected component of $\R^n\setminus \im(g)$. So if we denote the volumes these components by $\{V_k\}_k$ and the values of $\wind(g,z)$ on these components by $\{n_k\}_k$, we see that \eqref{isoperimetricinequality} is equivalent to
\begin{equation}\label{isoperimetricinequality2}
 (\sum_k |n_k|^{\frac{n}{n-1}}V_k)^{\frac{n-1}{n}}\lesssim A,
\end{equation}
which is equation (2.10) of \cite{osserman1978isoperimetric}.

Now we briefly discuss the main idea of the paper.
Given a continuous Kakeya map $\phi:B^{n-1}(0,1)\times [0,1]\rightarrow \R^{n}$, we want to show the Kakeya set $K=\im(\phi)$ has positive Lebesgue measure. It will be helpful to consider two maps: 
\begin{itemize}
    \item $ \phi_t(v):=\phi(v,t) $, the restriction of $\phi$ to the $t$-slice,
    \item $ \gamma_t:=\phi_t|_{S^{n-2}} $, the restriction of $\phi_t$ to the boundary sphere.
\end{itemize}

For $v_0\notin \im (\gamma_t),$ we denote by $\wind_t(v_0)$ the winding number of $ \gamma_t $ at point $v_0$, that is, the degree of the map 
$$S^{n-2}\rightarrow S^{n-2}, \, v\mapsto \frac{\gamma_t(v)-v_0}{|\gamma_t(v)-v_0|}.$$ 
If we can find some $t_0$ such that $\wind_{t_0}(v_0)\neq 0$ for some $v_0\notin \im(\gamma_{t_0})$, then by continuity, we have $v\notin \im (\gamma_t)$ and $\wind_{t}(v)\neq 0$ for $t$ close enough to $t_0$ and $v$ close enough to $v_0$. Thus by Lemma \ref{lem1}, we know there is an open neighborhood of $(t_0,v_0)$ contained in $\im(\phi)$, and hence it has positive Lebesgue measure. 

From the above discussion we see our main obstacle is the case that $|\im(\phi)|=0$ (so $\wind_t (v)$ is defined for almost every $v$) and the winding number is $0$ where it is defined. We will show this cannot happen if we assume some regularity property on the Kakeya map.

Let us first consider an easy case: $\phi$ is Lipschitz. By the area formula, we have
\begin{equation}\label{areaformula}
    \int_{B^{n-1}\times [0,1]} |\det (D\phi(x))|dx=\int_{\im(\phi)}\#\{x:\phi(x)=y\}dy.
\end{equation}
Using parameter $x=(v,t)$ and recalling the definition of $\phi$ in \eqref{defphi}, one can calculate that
$$ D_{v,t}\phi(v,t)=\left(
\begin{array}{cc}
     D_v c(v)+t I_{n-1}& 0  \\
     v& 1
\end{array}
\right). $$
Therefore, $\det(D\phi)=\det(D_{v,t}\phi(v,t))$ is a monic polynomial of degree $n-1$ in variable $t$. Therefore, $\det(D\phi)$ is nonzero almost everywhere. This implies \eqref{areaformula} is nonzero, and hence $|\im(\phi)|>0$. In fact we shall see in Section \ref{relatedsec} that using the winding numbers we can show $|\im (\phi)|$ is bounded from below by some positive constant depending only on the Lipschitz constant of $\phi.$

\medskip

Let us come back to Theorem \ref{holderthm} and Theorem \ref{sobolevthm} where the regularity assumption of the Kakeya map is weaker than Lipschitz.
The strategy is still proof by contradiction. We assume $\im(\phi)$ has zero Lebesgue measure. Then we use smooth maps to approximate H\"{o}lder continuous or Sobolev-regular Kakeya maps. We will eventually derive a contradiction using an isoperimetric inequality and the following key estimate:
\begin{equation}\label{666}
    1\lesssim \int_{0}^{1}|\int_{\R^{n-1}} \wind_{t}(x) dx|dt.
\end{equation}
One intuition for this to be true is that the inner integral is a polynomial in $t$ with leading term $t^{n-1}.$ This is very similar to an observation by Katz and Rogers in \cite{katz2018polynomial}. They showed that if $c$ is a polynomial of degree $d,$ then $|\im(\phi)|$ is bounded from below by some constant $c(n,d)>0.$ The key observation there is, by the area formula \eqref{areaformula} and Bezout's theorem, $|\im(\phi)|\geq C_d \int |\det (D\phi)|dvdt,$ and the latter integral is always at least $c(n)$ for some constant $c(n)>0.$  The difference in our approach is that instead of considering the integral of $|\det (D\phi)|$ in $(t,v),$ we consider the integral of $\int\det (D\phi_t)dv$ for each fixed $t.$ This is the signed volume on the $t$-slice and can be related to the winding number.

% \begin{lemma}\label{lem2}
% Given a smooth map $ g:S^{n-1}\rightarrow \R^n,$ we have
% \begin{equation}\label{isoperimetricinequality}
% \left( \int_{\R^n\setminus \im(g)} |\wind(g,z)|^{\frac{n}{n-1}} dz \right)^{\frac{n-1}{n}}\lesssim A(g). 
% \end{equation}
% Here $A(g)=\int_{S^{n-1}}|\det(\sqrt{Dg^*(\omega)Dg(\omega)})|d\omega$ is the area of the self-intersecting ``hypersurface" $\im(g)$. Locally, $|\det(\sqrt{Dg^*(\omega)Dg(\omega)})|d\omega$ is the volume form on $\im(g)$.
% \end{lemma}

% Lemma \ref{lem2} can be found in an equivalent form as equation (2.10) of \cite{osserman1978isoperimetric}. We  note that $z\mapsto\wind (g,z)$ is constant on each connected component of $\R^n\setminus \im(g)$. So if we denote these components by $\{V_k\}_k$ and the values of $\wind(g,z)$ on these components by $\{n_k\}_k$, we see \eqref{isoperimetricinequality} is equivalent to
% \begin{equation}\label{isoperimetricinequality2}
%  (\sum_k |n_k|^{\frac{n}{n-1}}V_k)^{\frac{n-1}{n}}\lesssim A.
% \end{equation}
% which is equation (2.10) of \cite{osserman1978isoperimetric}.

This paper is structured as follows.
In Section \ref{HolderSec} we will prove Theorem \ref{holderthm}, and in Section \ref{sobolevsec} we will prove Theorem \ref{sobolevthm}. Section \ref{relatedsec} will be a discussion of two other problems related to the Kakeya maps with regularity assumptions.

\vspace{5mm}
\noindent
\textbf{Notation.}\quad  We use $A\lesssim B$ to denote that $A\leq CB$ for constant $C$ which depends only on the dimension $n.$ $A\sim B$ will mean $A\lesssim B$ and $B\lesssim A.$ We will use $A\lesssim_{q} B$ to denote $A\leq C_{q}B$ for some constant $C_q$ depending on $q$ (and $n$). The closed unit ball in $\R^k$ is denoted by $B^k(0,1)$, and the unit sphere in $\R^k$ is denoted by $S^{k-1}.$

\section{H\"{o}lder continuous Kakeya map} \label{HolderSec}
Let $n\geq 3.$ Suppose we have a Kakeya map
$$\phi:B^{n-1}(0,1)\times [0,1]\rightarrow \R^{n},\qquad (v,t)\mapsto (c(v)+tv,t)$$
where $c:B^{n-1}(0,1)\rightarrow \R^{n-1}$ is continuous and $C^\alpha$ on $S^{n-2}$. We will prove the following theorem.

\begin{theorem}
  If $\alpha>\frac{(n-2)n}{(n-1)^2},$ then $\im(\phi)$ has positive Lebesgue measure.
\end{theorem}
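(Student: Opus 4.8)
The plan is to argue by contradiction: suppose $\im(\phi)$ has zero Lebesgue measure and derive a contradiction using the winding-number machinery. The central quantity is the signed volume on the $t$-slice, namely $S(t):=\int_{\R^{n-1}}\wind_t(x)\,dx$, which is well-defined for any continuous $c$ because it is the integral of the winding numbers of the map $\underline{\phi}_t:S^{n-2}\to\R^{n-1}$. First I would record the key algebraic fact (from the Lipschitz/smooth case, Proposition \ref{deglowerbnd}): when $c$ is smooth, $S(t)$ is a polynomial in $t$ of degree $n-1$ with leading coefficient equal to the volume of $B^{n-1}$, so in particular $\int_0^1 |S(t)|\,dt\gtrsim 1$. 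The point is that $S(t)=\int_{B^{n-1}}\det D\phi_t(v)\,dv$ by the degree formula, and $\det D\phi_t(v)$ expands as a polynomial in $t$ whose top-order term comes from differentiating the $tv$ part.

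Next I would set up the smooth approximation. Given the $C^\alpha$ map $c$, mollify at scale $\delta$ to get $c_\delta$, smooth, with $\|c_\delta - c\|_\infty \lesssim \delta^\alpha$ and $\|Dc_\delta\|_\infty \lesssim \delta^{\alpha-1}$ (and similarly controlled higher derivatives, though what we really need is a Lipschitz-type bound). Let $\phi_\delta$, $\phi_{\delta,t}$, $\underline{\phi}_{\delta,t}$ be the associated maps. For the smooth approximant, $\int_0^1|S_\delta(t)|\,dt\gtrsim 1$. On the other hand, if $\im(\phi)$ has measure zero, then (using continuity and uniform convergence $\phi_\delta\to\phi$) the image $\im(\phi_\delta)$ is contained in a small neighborhood of the measure-zero set $\im(\phi)$, so $|\im(\phi_\delta)|\to 0$ as $\delta\to 0$. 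Now I would apply Lemma \ref{lem2} (the isoperimetric inequality) to $g=\underline{\phi}_{\delta,t}$ on each slice: writing $W_\delta(t)=\int_{\R^{n-1}\setminus\im(\underline{\phi}_{\delta,t})}|\wind_t(x)|^{(n-1)/(n-2)}\,dx$, we get $W_\delta(t)^{(n-2)/(n-1)}\lesssim A(\underline{\phi}_{\delta,t})$, the $(n-2)$-dimensional area of the boundary sphere's image. (Here the ambient dimension for the isoperimetric inequality is $n-1$, so the exponents shift accordingly.) Since $\wind_t$ is integer-valued and supported where it is nonzero, Hölder's inequality gives $|S_\delta(t)| = |\int \wind_t| \le W_\delta(t)^{(n-2)/(n-1)} \cdot |\,\{\wind_t\neq 0\}\,|^{1/(n-1)}$, and the set $\{\wind_t\neq 0\}$ lies inside $\im(\phi_{\delta,t})$ whose $(n-1)$-measure, integrated in $t$, is $|\im(\phi_\delta)|\to 0$.

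Combining, $1\lesssim \int_0^1 |S_\delta(t)|\,dt \lesssim \left(\int_0^1 A(\underline{\phi}_{\delta,t})\,dt\right)^{(n-2)/(n-1)}\left(\int_0^1 |\im(\phi_{\delta,t})|\,dt\right)^{1/(n-1)}$ by Hölder in $t$, and the second factor tends to zero; so it suffices to show $\int_0^1 A(\underline{\phi}_{\delta,t})\,dt$ does not blow up too fast as $\delta\to 0$ — in fact we need it to be $O(\delta^{-\beta})$ with $\beta/(n-1) \cdot (\text{rate of decay of } |\im \phi_\delta|)$ still going to zero, i.e. we need a quantitative rate for $|\im(\phi_\delta)|\to 0$. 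The $(n-2)$-area $A(\underline{\phi}_{\delta,t})$ is controlled by $\|D\underline{\phi}_{\delta,t}\|_\infty^{n-2}$, and on $S^{n-2}$ the tangential derivative of $v\mapsto c_\delta(v)+tv$ is bounded by $1+\|Dc_\delta\|_\infty \lesssim \delta^{\alpha-1}$, giving $A(\underline{\phi}_{\delta,t})\lesssim \delta^{(\alpha-1)(n-2)}$. For the image bound I would use the area formula on $\phi_\delta$ together with the Lipschitz bound to see that $\im(\phi_\delta)$ is contained in the $C\delta^\alpha$-neighborhood of $\im(\phi)$; since $\im(\phi)$ has measure zero this neighborhood has measure $o(1)$, but to get a rate I would instead cover the relevant compact region and track how $|\im(\phi_\delta)|$ compares against the assumed-zero measure — more carefully, I expect one uses that $\im(\phi_\delta)$ is a Lipschitz image of an $n$-dimensional set with Lipschitz constant $\lesssim\delta^{\alpha-1}$, so it lies in finitely many balls whose total volume is controlled. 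Balancing the two exponents is exactly where the threshold $\alpha > \frac{(n-2)n}{(n-1)^2}$ emerges. \textbf{The main obstacle} I anticipate is making the rate of decay of $|\im(\phi_\delta)|$ effective: uniform convergence only gives $o(1)$, and converting "measure zero" into a quantitative smallness that beats the $\delta^{(\alpha-1)(n-2)}$ growth of the area term requires a careful covering argument balancing the scale $\delta$, the Hölder exponent, and the Lipschitz constant of the mollification — this is the heart of the proof and the source of the precise numerical threshold.
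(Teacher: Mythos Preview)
Your framework matches the paper's: mollify at scale $\e$, use Proposition~\ref{deglowerbnd} for the lower bound $1\lesssim\int_0^1|\int\wind_{t,\e}|\,dt$, bound the area term by $A(\gamma_{t,\e})\lesssim\e^{(\alpha-1)(n-2)}$, and close via H\"older plus the isoperimetric inequality. You also correctly identify the obstacle: your ``volume'' factor is $|\im(\phi_{\delta,t})|$ (integrated in $t$, this is $|\im(\phi_\delta)|$), and none of your proposed bounds on it are quantitative. The containment $\im(\phi_\delta)\subset\mathcal N_{C\delta^\alpha}(\im\phi)$ only yields $o(1)$ from $|\im\phi|=0$, and the Lipschitz-image bound goes the wrong way (the Lipschitz constant is $\gtrsim 1$, so it gives an upper bound that is \emph{large}, not small). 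Even if you cover $\im(\phi_t)$ directly via H\"older continuity, the domain $B^{n-1}$ is $(n-1)$-dimensional, so you get $|\mathcal N_{\e^\alpha}(\im\phi_t)|\lesssim\e^{(\alpha-1)(n-1)}$, which after balancing yields the useless threshold $\alpha>1$.

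The paper resolves this by replacing your volume factor with a strictly smaller set. The key observation is that $\wind_{t,\e}(x)=\wind_t(x)$ whenever $d(x,\im\gamma_t)\gtrsim\e^\alpha$ (from $\|\gamma_{t,\e}-\gamma_t\|_\infty\lesssim\e^\alpha$). Under the contradiction hypothesis $|\im\phi|=0$, Lemma~\ref{lem1} forces $\wind_t\equiv 0$ a.e., so in fact $\wind_{t,\e}$ is supported (up to measure zero) on $\mathcal N_{\e^\alpha}(\im\gamma_t)$, the $\e^\alpha$-neighborhood of the \emph{boundary} image. Since $\gamma_t$ has $(n-2)$-dimensional domain $S^{n-2}$, a covering argument gives the explicit rate $|\mathcal N_{\e^\alpha}(\im\gamma_t)|\lesssim \e^{-(n-2)}\cdot\e^{\alpha(n-1)}=\e^{(n-1)\alpha-n+2}$, independent of any measure-zero assumption. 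Plugging this into H\"older together with the area bound produces $1\lesssim\e^{((n-1)\alpha-n+2)/(n-1)}\cdot\e^{(\alpha-1)(n-2)}$, and positivity of the exponent is exactly $\alpha>\frac{(n-2)n}{(n-1)^2}$. The missing idea, then, is to localize to the neighborhood of the $(n-2)$-dimensional set $\im\gamma_t$ rather than the $(n-1)$-dimensional set $\im\phi_{\delta,t}$; this dimension drop is what makes the threshold nontrivial.
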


We start with some definitions. Denote the restriction of $c$ to $S^{n-2}$ by $\underline{c}.$ Let $\underline{c}_\e$ be the $\e$-mollification of $\underline{c}.$ To be precise, let $\rho:\R^{n-1}\rightarrow \R$ be a radial  compactly supported smooth bump function in $\R^{n-1}$ adapted to $B^{n-1}(0,1),$ and let $$\rho_\e(y):=d_\e\e^{2-n}\rho(y/\e),$$
where the normalization constant $d_\e$ is set to be \begin{equation}\label{d}
    d_\e=\e^{n-2}(\int_{S^{n-2}} \rho ((y_0-y)/\e)dy)^{-1} 
\end{equation}
for any $ y_0\in S^{n-2}$. Note that the right hand side of \eqref{d} is independent of $y_0\in S^{n-2}$ since $\rho$ is a radial function. Also, we have $d_\e\sim 1$.

We write $\underline{c}$ in components as $\underline{c}=(\underline{c}_1,\cdots,\underline{c}_{n-1})$. Finally we define $\underline{c}_{\e}$ to be $(\underline{c}_1*\rho_{\e},\cdots,\underline{c}_{n-1}*\rho_\e)$, where $\underline{c}_i*\rho_\e(x):=\int_{S^{n-2}}\underline{c}_i(y)\rho_\e(x-y)dy$ for $x\in S^{n-2}$.
So the convolution $\underline{c}_i*\rho_\e(x)$ averages the value of $\underline{c}_i $ over an $\e$-neighborhood of $x$ on $S^{n-2}$.

Define $\gamma_t$ to be the map
$$S^{n-2}\rightarrow \R^{n-1},\qquad v\mapsto \underline{c}(v)+tv,$$
and $\gamma_{t,\e}$ to be the map
$$S^{n-2}\rightarrow \R^{n-1},\qquad v\mapsto \underline{c}_\e(v)+tv.$$
Then from the H\"{o}lder continuity of $c$ we have $|\gamma_t(v)-\gamma_{t,\e}(v)|\lesssim_c \e^\alpha,$ which implies:
\begin{equation}\label{dist}
    \wind_{t,\e}(x)=\wind_t(x) \quad \text{if}\quad \dist(x,\im \gamma_t)\gtrsim_c \e^\alpha.
\end{equation}
Here $\wind_{t}(x)=\wind(\gamma_t,x)$ is the degree of the map
\begin{equation}\label{defwind1}
    S^{n-2}\rightarrow S^{n-2},\qquad y\rightarrow \frac{\gamma_t(y)-x}{|\gamma_t(y)-x|},
\end{equation} 
and $\wind_{t,\e}(x)=\wind(\gamma_{t,\e},x)$ is the degree of the map
\begin{equation}\label{defwind2}
    S^{n-2}\rightarrow S^{n-2},\qquad y\rightarrow \frac{\gamma_{t,\e}(y)-x}{|\gamma_{t,\e}(y)-x|}.
\end{equation} 

We will need the following identity from differential topology that relates the integral of winding numbers with the integral of the determinant of the differential.

\begin{lemma}\label{intwindlem}
Suppose $f:S^{n-2}\rightarrow \R^{n-1}$ is smooth.
Let $\wt f: B^{n-1}(0,1)\rightarrow \R^{n-1}$ be a smooth map satisfying $\wt f|_{S^{n-2}}=f$.  
  Then, we have
  \[\int_{\R^{n-1}} \wind(f,x) dx=\int_{B^{n-1}(0,1)} \det(D \wt f(y)) dy.\]
\end{lemma}
\begin{proof}
We recall that we say $x\in \R^{n-1}$ is a regular value of $f$ if for any $y\in f^{-1}(x)$ we have $\det(Df(y))\neq 0$. In particular, $x$ is a regular value if $f^{-1}(x)$ is an empty set.

By Lemma \ref{lemwind}, if $x\in \R^{n-1}\setminus \im(f)$ is a regular value of $\wt f$, then
  \[\wind(f,x)=\sum_{y\in \wt f^{-1}(x)} \sgn (y),\]
  where $\sgn(y)$ equals $1$ if $\det(D \wt f(y))>0,$ and $-1$ if $\det(D\wt f(y))<0$.
  We also note that by Sard's theorem almost every $x\in \R^{n-1}$ is a regular value of $\wt f$. So,
  \begin{align*}
      \int_{\R^{n-1}} \wind(f,x) dx & =\int_{\R^{n-1}} \sum_{y\in f^{-1}(x)} \sgn (y) dx \\
      & = \int_{B^{n-1}(0,1)} \det(D\wt f(y)) dy.
  \end{align*}
(Rigorously speaking, we should write the integration domain as $\R^{n-1}\setminus \im(f)$. But since $\im(f)$ has zero measure, we still write it as $\R^{n-1}$ without any ambiguity.)  
\end{proof}

\begin{remark}
From the previous lemma, we see that the value of the integral 
$$\int_{B^{n-1}(0,1)} \det(D \wt f(y)) dy$$
only depends on the value of $\wt f$ on $S^{n-2}$. Therefore, for any smooth function $f:S^{n-2}\rightarrow \R^{n-1}$, it makes sense to define the integral
$$\int_{B^{n-1}(0,1)} \det(D f(y)) dy.$$
\end{remark}

\begin{prop}\label{deglowerbnd}

Let $\wind_{\e,t}(x)$ be defined as in \eqref{defwind2}. We have
  \begin{equation}\label{deglowerbound}
    1\lesssim \int_{0}^{1}|\int_{\R^{n-1}} \wind_{t,\e}(x) dx|dt.
  \end{equation}
  The implicit constant is independent of $\e.$
\end{prop}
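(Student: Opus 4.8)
The plan is to compute the signed volume $\int_{\R^{n-1}}\wind_{t,\e}(x)\,dx$ explicitly and show it is a polynomial in $t$ of degree exactly $n-1$ with leading coefficient a nonzero dimensional constant; this immediately forces $\int_0^1\bigl|\int_{\R^{n-1}}\wind_{t,\e}(x)\,dx\bigr|\,dt\gtrsim 1$ by an elementary lower bound for the $L^1([0,1])$ norm of a monic-up-to-scaling polynomial of bounded degree. The key point is that since $\gamma_{t,\e}$ is a \emph{smooth} map $S^{n-2}\to\R^{n-1}$ (the mollification was introduced precisely for this), the signed volume enclosed by it equals the integral of the Jacobian of the radial extension, or equivalently, by the change-of-variables/area formula applied with sign, $\int_{\R^{n-1}}\wind_{t,\e}(x)\,dx = \int_{B^{n-1}(0,1)} \det D\Gamma_{t,\e}$ where $\Gamma_{t,\e}$ is the cone extension $(v,r)\mapsto r\,\gamma_{t,\e}(v/|v|)$, or more cleanly still, by Stokes' theorem it equals $\tfrac1{n-1}\int_{S^{n-2}}\gamma_{t,\e}^*\omega$ for the appropriate $(n-2)$-form $\omega$ on $\R^{n-1}$ with $d\omega = (n-1)\,dx_1\wedge\cdots\wedge dx_{n-1}$ (e.g. $\omega = \sum_i (-1)^{i-1} x_i\, dx_1\wedge\cdots\widehat{dx_i}\cdots\wedge dx_{n-1}$). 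Pulling back along $v\mapsto \underline{c}_\e(v)+tv$ and expanding in powers of $t$, the coefficient of $t^{n-1}$ is $\tfrac1{n-1}\int_{S^{n-2}} v^*\omega = \tfrac1{n-1}\int_{S^{n-2}}\iota^*\omega$ where $\iota:S^{n-2}\hookrightarrow\R^{n-1}$ is the inclusion, and this is exactly $\vol(B^{n-1}(0,1))$, a nonzero constant depending only on $n$.

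Concretely, I would first record that $\gamma_{t,\e}$ is Lipschitz (indeed smooth), so $\wind_{t,\e}$ is defined a.e.\ and integrable, and the identity
\[
\int_{\R^{n-1}}\wind_{t,\e}(x)\,dx = \frac{1}{n-1}\int_{S^{n-2}}\gamma_{t,\e}^{*}\omega
\]
holds; this is the standard fact that the signed volume enclosed by a cycle is the integral of winding number, proved by applying the divergence theorem on each connected component of $\R^{n-1}\setminus\im(\gamma_{t,\e})$ and summing (the component at infinity has winding number zero). Then I expand: writing $f(v)=\underline{c}_\e(v)$, we have $\gamma_{t,\e}(v)=f(v)+tv$ and
\[
\gamma_{t,\e}^*\omega = \sum_{i}(-1)^{i-1}\bigl(f_i(v)+tv_i\bigr)\, d\bigl(f_1+tv_1\bigr)\wedge\cdots\widehat{\phantom{x}}\cdots\wedge d\bigl(f_{n-1}+tv_{n-1}\bigr),
\]
which is manifestly a polynomial in $t$ of degree $\le n-1$; the $t^{n-1}$ term comes only from taking every factor to be $tv_i$ or $t\,dv_i$, giving $t^{n-1}\sum_i(-1)^{i-1}v_i\,dv_1\wedge\cdots\widehat{dv_i}\cdots\wedge dv_{n-1}$, whose integral over $S^{n-2}$ is $(n-1)\vol(B^{n-1})$. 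Hence $P_\e(t):=\int_{\R^{n-1}}\wind_{t,\e}(x)\,dx$ is a polynomial of degree $\le n-1$ with leading coefficient exactly $\vol(B^{n-1})$, independent of $\e$.

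Finally I would invoke the elementary fact that any polynomial $P(t)=a_{n-1}t^{n-1}+\cdots+a_0$ with $|a_{n-1}|=\vol(B^{n-1})=:c_n>0$ satisfies $\int_0^1|P(t)|\,dt\ge c'_n>0$ for a constant $c'_n$ depending only on $n$: indeed the space of monic (after dividing by $a_{n-1}$) polynomials of degree $\le n-1$ is, modulo the non-compact directions, handled by noting $\|P\|_{L^1[0,1]}$ dominates $|a_{n-1}|$ times the infimum of $\|Q\|_{L^1[0,1]}$ over monic degree-$(n-1)$ polynomials $Q$, and this infimum is positive since $\|\cdot\|_{L^1[0,1]}$ is a norm and the leading coefficient is a continuous functional, so it is bounded away from $0$ on the affine slice $\{a_{n-1}=1\}$ (the relevant infimum is attained and equals a scaled Chebyshev-type constant). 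This gives $1\lesssim\int_0^1|P_\e(t)|\,dt$ with implicit constant independent of $\e$, as claimed.

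The main obstacle — really the only nontrivial input — is justifying the signed-volume identity $\int\wind_{t,\e} = \tfrac1{n-1}\int_{S^{n-2}}\gamma_{t,\e}^*\omega$ at the level of regularity available ($\gamma_{t,\e}$ smooth but with image a possibly wildly self-intersecting immersed hypersurface), and ensuring all the pullback/Stokes manipulations are legitimate; once that is in hand the polynomial computation and the $L^1$ lower bound are routine. It is also worth remarking that $P_\e(t)$ need not converge as $\e\to 0$ and we make no such claim — we only need the uniform lower bound, which is why the leading coefficient being $\e$-independent is the crucial feature.
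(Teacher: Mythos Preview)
Your proposal is correct and follows essentially the same approach as the paper: apply Stokes' theorem to express the signed volume $\int_{\R^{n-1}}\wind_{t,\e}(x)\,dx$ as the integral over $S^{n-2}$ of the pullback of a primitive of the volume form, expand to see it is a polynomial in $t$ of degree $n-1$ with $\e$-independent leading coefficient $\sim 1$, and then invoke the $L^1[0,1]$ lower bound for polynomials with fixed leading coefficient. The only differences are cosmetic---the paper uses the primitive $x_{n-1}\,dx_1\wedge\cdots\wedge dx_{n-2}$ rather than your symmetric $\omega$, and you spell out the polynomial lower bound more carefully than the paper does.
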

\begin{proof}
    Fix $\e>0.$ By Lemma \ref{intwindlem} and the remark above, we have
    $$\int_{\R^{n-1}} \wind_{t,\e}(x) dx =\int_{B^{n-1}(0,1)}\det(D\gamma_{t,\e}(v))dv=\int_{B^{n-1}(0,1)}\gamma_{t,\e}^*(dx_1\wedge\cdots dx_{n-1}).
    $$ 
    Here $\gamma_{t,\e}^*(dx_1\wedge\cdots dx_{n-1})$ is the pullback of the differential form.
    Further by 
    % the diffeomorphism invariance of the differential form and 
    Stokes' theorem, we have
   \begin{align*}
       \int_{B^{n-1}(0,1)}\gamma_{t,\e}^*(dx_1\wedge\cdots dx_{n-1})&=\int_{B^{n-1}(0,1)}d\big(\gamma_{t,\e}^*(x_1 dx_2\wedge\cdots dx_{n-1})\big)\\
       &=\int_{S^{n-2}} \gamma_{t,\e}^*(x_1 dx_2\wedge\cdots dx_{n-1}).
   \end{align*}
   
    If we write $\underline{c}_\e(v)=(\underline{c}_{1,\e}(v),\ldots,\underline{c}_{n-1,\e}(v)),$ we have 
    $$\gamma_{t,\e}(v)=(tv_1+\underline{c}_{1,\e}(v),\ldots,tv_{n-1}+\underline{c}_{n-1,\e}(v)).$$
    Therefore
    \begin{align*}
        \int_{S^{n-2}} \gamma_{t,\e}^*(x_1 dx_2\wedge\cdots dx_{n-1}) & = \int_{S^{n-2}} (tv_{1}+\underline{c}_{1,\e}(v)) d(tv_2+\underline{c}_{2,\e}(v)) \wedge \cdots \wedge d(tv_{n-1}+\underline{c}_{n-1,\e}(v)) \\
        & =t^{n-1} \int_{S^{n-2}} v_{1} dv_2 \wedge \cdots \wedge dv_{n-1} + e(t) \\
        & =t^{n-1} + e(t),
    \end{align*}
    where $e(t)$ is a polynomial in $t$ with degree at most $n-2$ and coefficients determined by $c,\e.$
    In summary, we have shown
    $$ \int_{\R^{n-1}} \wind_{t,\e}(x) dx= t^{n-1}+e(t). $$
    
    Next we claim that 
    \begin{equation}\label{claim}
        \int_{0}^1 \big|t^{n-1}+e(t)\big| dt\gtrsim 1.
    \end{equation}
    If the claim \eqref{claim} is true, we may then conclude
    $$1\lesssim \int_{0}^1 \big|t^{n-1}+e(t)\big| dt= \int_{0}^{1}\big|\int_{\R^{n-1}} \wind_{t,\e}(x) dx\big|dt.$$
    
    So it suffices to prove the claim \eqref{claim}. The proof can be found in \cite{katz2018polynomial}, but we give the proof here for the sake of completeness. Since $t^{n-1}+e(t)$ is a monic polynomial of degree $n-1$, we write
    $$ t^{n-1}+e(t)=(t-r_1)\cdots (t-r_{n-1}), $$
    where $r_1\cdots r_{n-1}$ are complex numbers. We observe that for $t$ in a subset of $[0,1]$ with measure greater than $1/4,$ we have the estimate
    $$ |t-r_j|\ge \frac{1}{4n}\gtrsim 1,\ \ \ j=1,\cdots n-1, $$
    which means for $t$ in a subset of $[0,1]$ with measure greater than $1/4$ we have
    $$ |t^{n-1}+e(t)|=|t-r_1|\cdots |t-r_{n-1}|\gtrsim 1. $$
    Therefore, \eqref{claim} holds.
\end{proof}

\begin{proof}[Proof of Theorem 3]
Let $\underline{\phi}$ be the restriction of $\phi$ to $S^{n-2}\times [0,1].$ 
For the sake of contradiction we assume that $|\im \underline{\phi}|=0$ and $\wind_t(x)=0$ where it is defined. Then by Proposition \ref{deglowerbnd} and \eqref{dist} we have
\begin{equation}\label{2}
  \int_{0}^{1}\int_{\mathcal{N}_{C\e^\alpha}(\im \gamma_t)} |\wind_{t,\e}(x)|dxdt\gtrsim 1.
\end{equation}
Here $\mathcal{N}_{C\e^\alpha}(\im \gamma_t)$ denotes the $C\e^\alpha$-neighborhood of $\im \gamma_t$. $C$ is a constant depending on the H\"{o}lder constant of $c.$

Next we show
\begin{equation}\label{measNgammat}
    |\mathcal{N}_{C\e^\alpha}(\im \gamma_t)|\lesssim \e^{-n+2}(\e^{\alpha(n-1)})=\e^{(n-1)\alpha-n+2},
\end{equation}
and
\begin{equation}\label{areagammate}
    A(\gamma_{t,\e})\lesssim \e^{(\alpha-1)(n-2)}.
\end{equation}
(For the definition of $A(\gamma_{t,\e})$, see the line next to equation (\ref{isoperimetricinequality}).)

Indeed to see \eqref{measNgammat}, we choose a maximal $\e$-separated subset $S$ of $S^{n-2},$ so $|S|\sim \e^{2-n}$. We claim that the union of balls $\bigcup_{x_i\in S}B_{C_1\e^{\alpha}}(\gamma_t(x_i))$ covers $\mathcal{N}_{C\e^\alpha}(\im \gamma_t)$, when $C_1$ is large enough. In fact for any $y\in \mathcal{N}_{C\e^\alpha}(\im \gamma_t)$, there exists an $x\in S^{n-2}$, such that $|y-\gamma_t(x)|\lesssim \e^{\alpha}$. Also by the choice of $S$, there exists an $x_i\in S$ such that $|x-x_i|\leq \e$. So by the H\"{o}lder continuity, we have 
$$|y-\gamma_t(x_i)|\leq |y-\gamma_t(x)|+|\gamma_t(x)-\gamma_t(x_i)|\leq C_1\e^{\alpha},$$ 
which means $y\in\mathcal{N}_{C_1\e^{\alpha}}(\gamma_t(x_i))\subset \bigcup_{x_i\in S}B_{C_1\e^{\alpha}}(\gamma_t(x_i))$ if $C_1$ is sufficiently large. So \eqref{measNgammat} follows.

To see \eqref{areagammate}, we only need to show $|\nabla\gamma_{t,\e}|\lesssim \e^{\alpha-1}$ (here $\nabla$ is the gradient on $S^{n-2}$), which will imply 
$$A(\gamma_{t,\e})\lesssim\int_{S^{n-2}}|\det (D\gamma_{t,\e})|\lesssim \int_{S^{n-2}}|\nabla \gamma_{t,\e}|^{n-2}\lesssim \e^{(\alpha-1)(n-2)}.$$ 
% \begin{remark}
% Note that $A(\gamma_{t,\e})$ is defined to be the $(n-2)$-dimensional area of $\gamma_{t,\e}(S^{n-2})$ where the overlap is counted with multiplicity. We don't need a precise fomula for $A(\gamma_{t,\e})$. A bound of derivatives of $\gamma_{t,\e}$ suffices to give an upper bound of $A(\gamma_{t,\e})$.
% \end{remark}
Recall that $\rho$ is the mollifier with $\rho_\e(y)=d_\e\e^{2-n}\rho(y/\e)$ and $d_{\e}\sim 1$, so we have
\begin{align*}
    |\nabla \gamma_{t,\e}(y)| & = d_\e|\nabla\int_{S^{n-2}}\gamma_{t,\e}(x)\e^{2-n}\rho(\frac{y-x}{\e})dx| \\
    & \sim|\int_{S^{n-2}}\gamma_{t,\e}(x)\e^{2-n}\nabla(\rho(\frac{y-x}{\e}))dx|\\
    & = 
    |\int_{S^{n-2}}(\gamma_{t,\e}(x)-\gamma_{t,\e}(y))\e^{1-n}\nabla\rho(\frac{y-x}{\e})dx|\\
    & \lesssim \int_{\mathcal{N}_{C\e}(x)\cap S^{n-2}}|\gamma_{t,\e}(x)-\gamma_{t,\e}(y)|\e^{1-n}dy \\
    & \lesssim \int_{\mathcal{N}_{C\e}(x)\cap S^{n-2}}|x-y|^{\alpha}\e^{1-n}dy \\
    & \lesssim \e^{\alpha-1}.
\end{align*}
So we indeed have \eqref{areagammate}. 

Lemma \ref{lem2} states
$$\left(\int_{\R^{n-1}} |\wind_{t,\e}(x)|^{\frac{n-1}{n-2}} dx \right)^{n-2} \lesssim A(\gamma_{t,\e})^{n-1}.$$
Combining what we have so far with H\"{o}lder's inequality we obtain
\begin{align*}
    1 & \lesssim \int_{0}^{1}\int_{\mathcal{N}_{C\e^\alpha}(\im(\gamma_t))} |\wind_{t,\e}(x)|dxdt \\
      & \leq
|\mathcal{N}_{C\e^\alpha}(\im(\gamma_t))|^{1/(n-1)} \left( \int_{0}^{1}\int_{\mathcal{N}_{C\e^\alpha}(\im(\gamma_t))} |\wind_{t,\e}(x)|^{(n-1)/(n-2)}dxdt \right)^{(n-2)/(n-1)} \\
      & \lesssim \e^{((n-1)\alpha-n+2)/(n-1)}\e^{(\alpha-1)(n-2)} \\
      & =\e^{\alpha-\frac{n-2}{n-1}+(\alpha-1)(n-2)}.
\end{align*}
This is a contradiction if $\alpha-\frac{n-2}{n-1}+(\alpha-1)(n-2)>0,$ that is, $\alpha>\frac{(n-2)n}{(n-1)^2}.$
\end{proof}

% Actually, we believe we can drop the regularity of the Kakeya map to $C^{\alpha}$ for any small $\alpha>0$, and still get the same result. We make it a conjecture.

% \begin{conjecture}
% For any $\alpha>0$, if the Kakeya map is $C^{\alpha}$, then the associated Kakeya set has positive measure.
% \end{conjecture}

\section{Sobolev regular Kakeya map}\label{sobolevsec}

We use the same notation as in Section \ref{HolderSec} but instead of assuming $c|_{S^{n-2}}$ is H\"{o}lder continuous $C^\alpha,$ we assume $c$ is continuous and $c|_{S^{n-2}}$ lies in the Sobolev space $W^{1,p}(S^{n-2})$ for some $p>n-2.$ We write $p=n-2+\delta$ for some small $\delta>0.$ 

To compare this regularity assumption with that in Theorem \ref{holderthm}, by the Sobolev embedding, we know that $c|_{S^{n-2}}$ is $\delta'$-H\"{o}lder continuous for some $\delta'=O(\de)$. On the other hand when $\delta$ is small the space $ W^{1,p}$ and  $C^\alpha$ for $\alpha>\frac{(n-2)n}{(n-1)^2}$ are mutually non-inclusive. 

We will prove the following theorem.

\begin{theorem}\label{sobolev}
    If $c$ is continuous and $c|_{S^{n-2}}\in W^{1,n-2+\delta}(S^{n-2})$ for some $\delta>0$ then $\im(\phi)$ has positive Lebesgue measure.
\end{theorem}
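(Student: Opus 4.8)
The plan is to mimic the proof of Theorem 3 (the Hölder case) with the key differences coming from how the mollification error and the area $A(\gamma_{t,\e})$ are estimated, now using Sobolev rather than just Hölder norms. Since $c$ is continuous and $c \in H^{(n-1)/2+\delta}(\overline{B^{n-1}(0,1)})$, Sobolev embedding gives that $c$ is $\delta'$-Hölder for some $\delta' > 0$, so the qualitative mechanism of the previous section still works: Proposition \ref{deglowerbnd} holds verbatim for $\gamma_{t,\e}$ (the computation of $\int \wind_{t,\e}$ as a polynomial in $t$ with leading coefficient $1$ did not use any quantitative regularity of $c$), and the equality $\wind_{t,\e}(x) = \wind_t(x)$ when $d(x, \im\gamma_t) \gtrsim \e^{\delta'}$ still holds. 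So, arguing by contradiction, I would assume $|\im\underline\phi| = 0$ and $\wind_t(x) = 0$ a.e., and conclude as in \eqref{2} that
\begin{equation*}
  1 \lesssim \int_0^1 \int_{\calN_{\e^{\delta'}}(\im\gamma_t)} |\wind_{t,\e}(x)|\, dx\, dt .
\end{equation*}

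Next I would record the two quantitative inputs. By exactly the same covering argument as for \eqref{measNgammat}, using only the $\delta'$-Hölder continuity, one gets $|\calN_{\e^{\delta'}}(\im\gamma_t)| \lesssim \e^{-n+2}\e^{\delta'(n-1)} = \e^{(n-1)\delta' - n + 2}$; I would keep $\e^{\delta'}$ as the neighborhood scale but I expect to be able to take $\delta'$ as close to $1/2 + \delta/(n-1)$-ish as needed, or simply to absorb it. The crucial new estimate replaces \eqref{areagammate}: instead of the pointwise bound $|\nabla\gamma_{t,\e}| \lesssim \e^{\alpha-1}$, I would use the Sobolev regularity $\gamma_t \in H^{(n-2)/2+\delta}(S^{n-2})$ to control $A(\gamma_{t,\e}) = \int_{S^{n-2}} |\det D\gamma_{t,\e}|$. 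The idea is that $A(\gamma_{t,\e}) \lesssim \int_{S^{n-2}} |\nabla\gamma_{t,\e}|^{n-2}$, and $\|\nabla\gamma_{t,\e}\|_{L^{n-2}(S^{n-2})}$ should be controlled by $\|\gamma_t\|_{H^{(n-2)/2+\delta}}$ times a power of $\e^{-1}$: mollifying at scale $\e$ costs at most $\e^{-(1 + (n-2)/2 + \delta)}\cdot\e^{\text{(gain from }L^2\text{ to }L^{n-2}\text{ via Bernstein)}}$ — more precisely, $\|\nabla \gamma_{t,\e}\|_{L^{n-2}} \lesssim \e^{-\beta}$ where $\beta = 1 + (n-2)/2 + \delta - (n-2)/2 - (\text{Sobolev exponent})$... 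I would compute this carefully via a Littlewood–Paley / Bernstein decomposition of the mollification, but the upshot I am aiming for is an inequality of the form $A(\gamma_{t,\e}) \lesssim \e^{-(n-2)(1/2 - \delta/(n-2)) \cdot \text{something}}$, uniformly in $t$ (note $tv$ is smooth, so only $\underline c_\e$ matters, and its derivative in $L^{n-2}$ is what must be estimated).

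Then I would plug into Lemma \ref{lem2}, which gives $\big(\int |\wind_{t,\e}(x)|^{(n-1)/(n-2)}\, dx\big)^{n-2} \lesssim A(\gamma_{t,\e})^{n-1}$, apply Hölder's inequality in $(x,t)$ exactly as in the Hölder-case proof, and obtain
\begin{equation*}
  1 \lesssim |\calN_{\e^{\delta'}}(\im\gamma_t)|^{1/(n-1)} \cdot A(\gamma_{t,\e}) \lesssim \e^{(\delta'(n-1)-n+2)/(n-1)} \cdot \e^{-\beta(n-2)},
\end{equation*}
and I would need the exponent of $\e$ on the right to be strictly positive as $\e \to 0$, which should hold precisely because $s > (n-1)/2$, i.e. $\delta > 0$, possibly after optimizing the choice of $\delta'$ (it is legitimate to shrink $\delta'$ toward its Sobolev-embedding value, and also to choose the mollifier scale as a power $\e^\sigma$ of a master parameter if that helps balance the two exponents). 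The main obstacle I anticipate is exactly this bookkeeping step: getting the sharp power-of-$\e$ loss in $\|\nabla\underline c_\e\|_{L^{n-2}(S^{n-2})}$ from $\underline c \in H^{(n-2)/2+\delta}(S^{n-2})$ — this requires the right interpolation/Bernstein inequality on the sphere (moving from $L^2$-based fractional smoothness to an $L^{n-2}$ bound on a first derivative of the mollification), and then checking that when this is combined with the neighborhood volume bound the net exponent is positive exactly under the hypothesis $\delta > 0$. A secondary point to be careful about is uniformity in $t \in [0,1]$ of all these bounds, which is fine since the $t$-dependent part $tv$ of $\gamma_{t,\e}$ is smooth with derivatives bounded independently of $t$.
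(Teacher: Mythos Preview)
Your overall strategy—argue by contradiction exactly as in the H\"older case—is sound, but the power-of-$\e$ bookkeeping you propose cannot close. The sharpest possible area bound is in fact $A(\gamma_{t,\e})\lesssim_c 1$ \emph{uniformly in $\e$}: by Sobolev embedding $\underline c\in H^{(n-2)/2+\delta}(S^{n-2})\hookrightarrow W^{1,n-2}(S^{n-2})$, and mollification does not increase the $W^{1,n-2}$ norm, so $\int_{S^{n-2}}|\nabla\gamma_{t,\e}|^{n-2}\lesssim\|\gamma_t\|_{W^{1,n-2}}^{n-2}\lesssim_c 1$ with no $\e^{-\beta}$ loss. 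Plugging $\beta=0$ into your final line gives $1\lesssim \e^{((n-1)\delta'-(n-2))/(n-1)}$, and since Sobolev embedding only yields H\"older exponent $\delta'\le\delta$, the exponent $(n-1)\delta'-(n-2)$ is \emph{negative} whenever $\delta<(n-2)/(n-1)$. No rescaling of the mollifier or optimization over $\delta'$ rescues this: the crude covering bound on $|\calN_{\e^{\delta'}}(\im\gamma_t)|$ is simply too weak in the small-$\delta$ regime.

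The missing idea is to drop the quantitative covering bound and use the contradiction hypothesis $|\im\underline\phi|=0$ directly: it gives $\int_0^1|\calN_{\e^{\delta'}}(\im\gamma_t)|\,dt\le|\calN_{\e^{\delta'}}(\im\underline\phi)|\to 0$ as $\e\to0$. Combined with the uniform bound $A(\gamma_{t,\e})\lesssim_c 1$ and Lemma~\ref{lem2}, H\"older's inequality then yields
\[
1\lesssim \int_0^1|\calN_{\e^{\delta'}}(\im\gamma_t)|^{1/(n-1)}A(\gamma_{t,\e})\,dt \lesssim_c |\calN_{\e^{\delta'}}(\im\underline\phi)|^{1/(n-1)}\to 0,
\]
the desired contradiction. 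The paper's own proof takes a slightly longer route: rather than assuming $\wind_t\equiv 0$ a.e., it proves directly that $\int_0^1\!\int\wind_{t,\e}\,dx\,dt\to\int_0^1\!\int\wind_t\,dx\,dt$, which requires first establishing integrability of $\wind_t$ via a degree estimate of Nguyen and the Sobolev--Slobodeckij characterization of $H^s$. In your contradiction setup that integrability step is unnecessary, so once you make the two corrections above your argument is actually shorter than the paper's.
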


\begin{proof}
Since $c|_{S^{n-2}} \in W^{1,n-2+\delta}(S^{n-2})$, we also have $\gamma_t \in W^{1,n-2+\delta}(S^{n-2}).$ 
We consider the mollified $c_\e$ as we did in Section \ref{HolderSec}. Also recall the definitions of $\wind_t(x)$ and $\wind_{t,\e}(x)$ in \eqref{defwind1} and \eqref{defwind2}.

By Proposition \ref{deglowerbnd}, we have
\begin{equation}\label{convergence}
    \int_{0}^{1}|\int_{\R^{n-1}} \wind_{t,\e}(x) dx|dt 
    \gtrsim 1.
    %\rightarrow  %\int_{0}^{1}|\int_{\R^{n-1}} \wind_{t}(x) dx|dt \text{ as } \e \rightarrow 0
\end{equation}
Suppose for the sake of contradiction that $|\im(\phi)|=0$ and $\wind_t(x)=0$ wherever it is defined.
Since by the Sobolev embedding $c|_{S^{n-2}}$ is in some H\"older space $C^{\delta'}$ for some $\delta '>0$, the same reasoning as in \eqref{dist} yields
\begin{equation}\label{distance}
    \wind_{t,\e}(x)=\wind_{t}(x)\quad \text{if}\quad \dist(x,\im \gamma_t ) \gtrsim_c \e^{\delta'}.
\end{equation}
So if we split 
\begin{align*}
    \int_{0}^{1}|\int_{\R^{n-1}} \wind_{t,\e}(x) dx|dt & \leq\int_{0}^{1}|\int_{\mathcal{N}_{C\e^{\delta'}}(\im \gamma_t)} \wind_{t,\e}(x) dx|dt
+\int_{0}^{1}|\int_{\R^{n-1}\setminus\mathcal{N}_{C\e^{\delta'}}(\im \gamma_t)} \wind_{t,\e}(x) dx|dt \\
   & =:I_1+I_2,
\end{align*}
then 
$$I_2=\int_{0}^{1}|\int_{\R^{n-1}\setminus\mathcal{N}_{C\e^{\delta'}}(\im \gamma_t)} \wind_{t,\e}(x) dx|dt =0,$$
since we assumed $\wind_{t}(x)=0$ and we have \eqref{distance}.

In the rest of the proof we will show that $I_1\rightarrow 0$ as $\e \rightarrow 0$, which contradicts \eqref{convergence}. 
We claim that
\begin{equation}\label{claim2}
    A(\gamma_{t,\e}) :=\int_{S^{n-2}} |\det \left(\sqrt{(D_{\omega}(c*\rho_\e +t\om))^*(D_{\omega}(c*\rho_\e +t\om))} \right)|d\omega\lesssim_c 1 .
\end{equation}
Here the notation is from Lemma \ref{lem2}. In the integral, we think of $c*\rho_\e+t\om$ as a function $S^{n-2}\rightarrow \R^{n-1}$.

To prove \eqref{claim2}, we cover $S^{n-2}$ by two coordinate charts $\{U_1,U_2\}$ with coordinate maps $\psi_i: U_i\rightarrow B^{n-2}(0,1)$. Note that the function $t\om$ is smooth and $\|c*\rho_{\e}\|_{W^{1,p}(S^{n-2})}\lesssim \|c\|_{W^{1,p}(S^{n-2})}$. So after change of variables and pulling back using $\psi_i$, the inequality \eqref{claim2} becomes $\int_{B^{n-2}(0,1)} |\det (\sqrt{(Df)^*(Df)} )|\lesssim_c 1$, where $f$ is a function on $B^{n-2}(0,1)$ satisfying $ \|f\|_{W^{1,p}(B^{n-2}(0,1))}\lesssim 1+\|c\|_{W^{1,p}(S^{n-2})} $. Expanding the integrand we see that 
$$\int |\det (\sqrt{(Df)^*(Df)})|=\int |\det ({(Df)^*(Df)})|^{\frac{1}{2}}\lesssim \int|Df|^{n-2} \lesssim \|f\|_{W^{1,n-2}}^{n-2}\lesssim_c 1.$$
So, we prove the claim \eqref{claim2}.

Therefore applying the isoperimetic inequality (Lemma \ref{lem2}) gives us
\begin{multline*}
    |\int_{\mathcal{N}_{C\e^{\delta'}}(\im \gamma_t)} \wind_{t,\e}(x) dx|\lesssim |\mathcal{N}_{C\e^{\delta'}}(\im \gamma_t)|^{1/(n-1)} \|\wind_{t,\e}\|_{L^{(n-1)/(n-2)}} \\
    \lesssim |\mathcal{N}_{C\e^{\delta'}}(\im \gamma_t)|^{1/(n-1)} A(\gamma_{t,\e}) \lesssim_c |\mathcal{N}_{C\e^{\delta'}}(\im \gamma_t)|^{1/(n-1)}.
\end{multline*}
In the last inequality, we used \eqref{claim2}.

Since 
$$\int_{0}^{1}|\mathcal{N}_{C\e^{\delta'}}(\im \gamma_t)|^{1/(n-1)}dt \lesssim \left(\int_{0}^{1}|\mathcal{N}_{C\e^{\delta'}}(\im \gamma_t)|dt\right)^{1/(n-1)} \le |\mathcal{N}_{C\e^{\delta'}}(\im \underline{\phi})|^{1/(n-1)},$$
we conclude
$$ I_1\lesssim_c |\mathcal{N}_{C\e^{\delta'}}(\im \underline{\phi})|^{1/(n-1)} \rightarrow 0$$
as $\e\rightarrow 0$ (because by assumption $|\im \underline{\phi}|=0$). Hence we finish the proof of Theorem \ref{sobolev}.
\end{proof}

\section{Other results related to the Kakeya problem}\label{relatedsec}

In this section we discuss two Kakeya-type problems which are under different settings from the previous sections, and may be of independent interest.

\subsection{Tube-Kakeya set with Lipschitz spacing condition}

In the previous sections, we studied the Kakeya set which is the union of line segments. In this subsection, we study the tube-version of the Kakeya set which is the union of $\de$-tubes.

\begin{definition}[Tube-Kakeya set]\label{defliptube}
	For $0<\de<1$, we choose $ V_\de $ to be a maximal $ \delta $-separated subset of $ B^{n-1}(0,1) $. For a map $ c:B^{n-1}(0,1)\rightarrow \R^{n-1} $, we consider the set of tubes $ \{ T_{c(v),v}^\de \}_{v\in V_\de} $, where $  T_{x,v}^\de $ is the $ \delta$-neighborhood of the segment $\{ (x+tv,t): t\in [0,1]\} $.
	We call the union of these tubes
	$$ \bigcup_{v\in V_\de} T^\de_{c(v),v} $$
	the tube-Kakeya set.
\end{definition}

It is conjectured that for any map $c$ we have
$$ |\bigcup_{v\in V_\de} T^\de_{c(v),v}|\gtrsim_\e \de^\e, $$
for $\e>0$.
In this subsection, we will assume some regularity on the map $c$ and prove the result.
We define the Lipschitz constant of $c$ by $$\|c\|_{\Lip}:=\max_{v,v'\in B^{n-1}(0,1)}\frac{|c(v)-c(v')|}{|v-v'|}.$$
Our result is:
	
\begin{prop}\label{propliptube}
	If $ \{ T^\de_{c(v),v} \}_{v\in V_\de} $ is a collection of tubes as in Definition \ref{defliptube}, then $$ |\bigcup_{v\in V_\de}T^{\de}_{c(v),v}|\gtrsim (\|c\|_{\Lip}+1)^{-(n-1)^2}.  $$ 
	%In particular, if $ \{ T^c_{v,\delta} \}_{v\in S} $ satisfies the Lipschitz spacing condition, then $ |\bigcup_{v\in S}T^c_{v,\delta}|\gtrsim 1$.
\end{prop}
\begin{proof}
    Consider the map 
	$$ \phi: B^{n-1}(0,1)\times [0,1]\rightarrow \R^n,\qquad (v,t)\mapsto (c(v)+tv,t), $$
	which is the Kakeya map corresponding to $c$. 
	
	Denote the Lipschitz constant $L=\|c\|_{\Lip}$.
	Note that $ B^{n-1}(0,1)\subset \bigcup_{v\in V_\de}B(v,2\delta), $ and $ \phi(B(v,2\delta)\times [0,1])\subset T^{100L\delta}_{c(v),v} $ (recall that $ T^{100L\de}_{c(v),v}$ is the $100L\delta$-neighborhood of the segment $\{ (c(v)+tv,t): t\in [0,1]\} $). Therefore, \begin{equation}\label{lastpa}
	    \im(\phi)\subset \bigcup_{v\in V_\de}T^{100L\delta}_{c(v),v}.
	\end{equation}
% 	It suffices to prove $|\im(\phi)|\gtrsim (\|c\|_{\Lip}+1)^{-(n-2)(n-1)}$.
	
	Choose a maximal $\delta$-separated subset of $ B^{n-1}(0,100L\delta) $, denoted by $ \{ x_1,\cdots, x_M \} $ with $ M\sim L^{n-1} $. We see for any $ v\in V_\de $, 
	$$ T^{100L\delta}_{c(v),v}\subset\bigcup_{1\leq i\leq M} T^{\de}_{c(v)+x_i,v}. $$
	Combined with \eqref{lastpa}, we have $$\im (\phi)\subset \bigcup_{1\leq i\leq M}\bigcup_{v\in V_\de}T^{\de}_{c(v)+x_i,v}. $$
	We also observe that $ \bigcup_{v\in V_\de}T^{\de}_{c(v)+x_i,v}=\bigcup_{v\in V_\de}T^{\de}_{c(v),v}+(x_i,0) $, which implies $$ |\bigcup_{v\in V_\de}T^{\de}_{c(v)+x_i,v}|=|\bigcup_{v\in V_\de}T^{\de}_{c(v),v}|,$$
	for any $x_i$.
	As a result, we have 
	$$ |\im(\phi)|\le M |\bigcup_{v\in V_\de}T^{\de}_{c(v),v}|\lesssim \|c\|_{\Lip}^{n-1}|\bigcup_{v\in V_\de}T^{\de}_{c(v),v}|.$$
	
	It remains to find a lower bound of $|\im(\phi)|$. We note the following inequalities
	\begin{align*}
	    1&\lesssim \int_{0}^{1}\int_{\R^{n-1}} |\wind_{t}(x)| dxdt\\
	    &\lesssim (\int_{0}^{1}\int_{\R^{n-1}} |\wind_{t}(x)|^{\frac{n-1}{n-2}} dxdt)^{\frac{n-2}{n-1}}|\im(\phi)|^{\frac{1}{n-1}}\\
	    &\lesssim (\int_0^1 A(\phi_t|_{S^{n-2}})^{\frac{n-1}{n-2}}dt)^{\frac{n-2}{n-1}}|\im(\phi)|^{\frac{1}{n-1}}\\
	    &\lesssim (\|c\|_{\Lip}+1)^{n-2} |\im(\phi)|^{\frac{1}{n-1}}
	\end{align*} 
	Here, the first inequality is by Proposition \ref{deglowerbnd} , the second inequality is  H\"older's inequality, the third inequality is by the isoperimetric inequality (Lemma \ref{lem2}), and the fourth inequality is by $A(\phi_t|_{S^{n-2}})\lesssim \|\det \sqrt{D\phi_t^*D\phi_t}\|_\infty\lesssim \|D\phi_t\|_\infty^{n-2}\lesssim (\|c\|_{\Lip}+1)^{n-2}$.
	
	We obtain 
	$$ |\bigcup_{v\in V_\de}T^{\de}_{c(v),v}|\gtrsim \|c\|_{\Lip}^{-(n-1)}|\im(\phi)|\gtrsim (\|c\|_{\Lip}+1)^{-(n-1)^2}. $$
\end{proof}

Furthermore we may ask the estimate for the tube-Kakeya set under the $\alpha$-H\"older regularity assumption. Motivated by Theorem \ref{holderthm}, we wonder whether the following is true.

\begin{question}\label{lipkakeya}
Does there exists $\alpha<1$ so that
\begin{equation}\label{eqlipkakeya}
    |\bigcup_{v\in V_\de}T^{\de}_{c(v),v}|\gtrsim_{\e,\|c\|_{C^\alpha}}\de^\e, 
\end{equation}
for any $\e>0$ ?
\end{question}

\subsection{Line-Kakeya set}

In this subsection we would like to use a slightly different notation for the Kakeya map. The direction set is parametrized by $S^{n-1}$, as opposed to Definition \ref{kakeyamaplocal} where the direction set is parametrized by $B^{n-1}(0,1)$. 

\begin{definition}
    For any function $ c:S^{n-1}\rightarrow \R^n $, we let $ \phi_c$ be the map 
    $$\phi_c:S^{n-1}\times [0,1]\rightarrow \R^{n},\qquad (v,t)\mapsto c(v)+tv.$$
    We call $ \phi_c $ a Kakeya map, and call $K_c:=\im(\phi_c)=\bigcup_{v\in S^{n-1}}c(v)+[0,1]\cdot v$ the associated {{Kakeya set}}. 
\end{definition}

We can also define the line-Kakeya set where line segments are replaced by infinite lines
$$ \tilde{K}_{c}:= \bigcup_{v\in S^{n-1}} c(v)+\R_{\geq 0} \cdot v.$$

\begin{prop}\label{linekakeya}
	If $ c $ is continuous and $\im (c)\subset B(0,R) $, then $ \tilde{K}_{c}\supset\R^n\backslash B(0,R) $.
\end{prop}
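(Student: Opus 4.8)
The plan is to reduce the claim to a fixed‑point statement for a self‑map of the sphere and then compute a degree. Fix $z\in\R^n$ with $|z|\ge R$. Since $\im(c)\subset B(0,R)$ we have $|c(v)|<R\le|z|$, so $z\ne c(v)$ for every $v\in S^{n-1}$, and therefore
$$G_z\colon S^{n-1}\to S^{n-1},\qquad v\mapsto\frac{z-c(v)}{|z-c(v)|}$$
is a well‑defined continuous map. The first observation is that $z\in\tilde{K}_c$ exactly when $G_z$ has a fixed point: a fixed point $v_0$ gives $z-c(v_0)=|z-c(v_0)|\,v_0$, i.e. $z=c(v_0)+sv_0$ with $s=|z-c(v_0)|\ge 0$, so $z$ lies on the ray $c(v_0)+\R_{\ge 0}v_0$; conversely, if $z=c(v_0)+sv_0$ then $s>0$ (because $z\ne c(v_0)$) and $v_0$ is a fixed point of $G_z$. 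So it suffices to show $G_z$ always has a fixed point.

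Next I would compute $\deg G_z$. Consider $h_\tau(v):=\dfrac{z-\tau c(v)}{|z-\tau c(v)|}$ for $\tau\in[0,1]$. This is well defined, since $z=\tau c(v)$ would force $|z|=\tau|c(v)|\le|c(v)|<R$, contradicting $|z|\ge R$; it is continuous in $(v,\tau)$ and connects the constant map $h_0\equiv z/|z|$ to $h_1=G_z$. Hence $G_z$ is null‑homotopic and $\deg G_z=0$.

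Finally I would invoke the standard fact that a continuous self‑map $f$ of $S^{n-1}$ with no fixed point is homotopic to the antipodal map $v\mapsto -v$ — via the homotopy $(v,\tau)\mapsto\dfrac{(1-\tau)f(v)-\tau v}{|(1-\tau)f(v)-\tau v|}$, whose denominator vanishes only if $f(v)=v$ — and hence has degree $(-1)^n\ne 0$. Since $\deg G_z=0$, the map $G_z$ must have a fixed point, so $z\in\tilde{K}_c$. As $z$ with $|z|\ge R$ was arbitrary, this yields $\tilde{K}_c\supset\R^n\setminus B(0,R)$.

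I do not expect a genuine obstacle here: the argument is elementary degree theory. The only points that need care are that the two homotopies above have nonvanishing denominators, and both reduce to the single inequality $|c(v)|<R\le|z|$ that makes the statement meaningful in the first place. One could instead route the argument through Lemma \ref{lem1}, by capping off a large spherical annulus on which $y\mapsto c(y/|y|)+y$ is defined and computing the winding number of the image of its outer boundary, but the fixed‑point formulation above is shorter and avoids an auxiliary cutoff near the origin.
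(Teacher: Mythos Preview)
Your proof is correct and follows essentially the same route as the paper: define the self-map $G_z(v)=\frac{z-c(v)}{|z-c(v)|}$ of $S^{n-1}$, show it has degree $0$, and conclude it has a fixed point. The only cosmetic difference is that the paper obtains $\deg G_z=0$ by observing that $\im G_z$ lies in an open hemisphere (so $G_z$ is not surjective), whereas you obtain it via the explicit linear homotopy to the constant $z/|z|$; these are of course equivalent.
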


\begin{proof}
	We prove it using the degree theory from topology. For any point $ x\notin B(0,R) $, we define a map
	$$ f:S^{n-1}\rightarrow S^{n-1} $$
	$$ f(v)=\frac{x-c(v)}{|x-c(v)|}. $$
	We see that $ f $ is not surjective (actually $\im(f)$ is contained in a half sphere), and hence $ f $ has degree 0, which implies $ f $ has a fixed point (see for example Section 2.2 of \cite{hatcher2002algebraic}). Let $v$ be a fixed point of $f.$ Then
	$$ v=\frac{x-c(v)}{|x-c(v)|} $$
	or equivalently,
	$$ x=c(v)+|c(v)-x|\cdot v\in \tilde{K}_{c}.$$
\end{proof}

We could immediately obtain the following result for a segment-Kakeya set provided that $ c $ has small image.
\begin{prop}
	If $ c $ is continuous and $\mathrm{diam}(\im (c))<\frac{1}{2} $. Then $ K_{c} $ has positive Lebesgue measure.
\end{prop}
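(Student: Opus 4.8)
The plan is to derive this as a quick corollary of Proposition~\ref{linekakeya}, via two reductions. First I would normalize the position and scale of $\im(c)$. Since $\mathrm{diam}(\im(c))<\tfrac12$, pick any $p\in\im(c)$; then $|c(v)-p|\le \mathrm{diam}(\im(c))$ for every $v\in S^{n-1}$, so $\im(c)\subseteq B(p,R)$ for any fixed $R$ with $\mathrm{diam}(\im(c))<R<\tfrac12$. Replacing $c$ by $c-p$ replaces $K_c$ by its translate $K_c-p$, which has the same Lebesgue measure; so without loss of generality $\im(c)\subset B(0,R)$ for some $R<\tfrac12$.

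Next I would feed this into Proposition~\ref{linekakeya}, which yields $\tilde{K}_{c}\supseteq\R^n\setminus B(0,R)$. Consider the spherical shell $\{x\in\R^n: R<|x|\le 1-R\}$, which is nonempty precisely because $R<\tfrac12$. For $x$ in this shell we have $x\in\tilde{K}_c$, so $x=c(v)+sv$ for some $v\in S^{n-1}$ and $s\ge 0$. Taking norms forces $s=|x-c(v)|$, and then $s\le |x|+|c(v)|<|x|+R\le 1$. Hence $s\in[0,1]$ and $x=c(v)+sv\in K_c$. Therefore $K_c$ contains the shell $\{x:R<|x|\le 1-R\}$, whose Lebesgue measure is a positive multiple of $(1-R)^n-R^n>0$; translating back, the original $K_c$ has positive measure.

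I do not expect a genuine obstacle here: the only nontrivial ingredient is Proposition~\ref{linekakeya} itself (the degree-theoretic argument that, for $x$ outside the ball containing $\im(c)$, the Gauss-type map $v\mapsto \frac{x-c(v)}{|x-c(v)|}$ lands in an open hemisphere, hence has degree $0$, hence a fixed point), and that is already established. The one thing to keep in view is that the parameter $s$ realizing $x\in\tilde{K}_c$ is automatically equal to $|x-c(v)|$, which is exactly what keeps it in $[0,1]$ once $|x|\le 1-R$; this is why the half-line statement upgrades for free to the unit-segment statement, with no further topology needed.
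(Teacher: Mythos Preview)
Your proof is correct and follows exactly the same route as the paper: reduce to Proposition~\ref{linekakeya} after translating so that $\im(c)\subset B(0,R)$ with $R<\tfrac12$, then check that for points in the shell $\{R<|x|\le 1-R\}$ the ray parameter $s=|x-c(v)|$ lands in $[0,1]$. If anything, your constants are cleaner than the paper's (which uses a somewhat casual ``$0.9$'' in place of a radius $<\tfrac12$).
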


\begin{proof}
By the assumption, we have $ c(S^{n-1}) \subset B_{0.9} $, a ball of radius 0.9. By Proposition \ref{linekakeya}, we have $ \tilde{K}_{c}\supset\R^n\setminus B_{0.9}$. Therefore $ K_{c}\supset B_1\setminus B_{0.9}. $   
\end{proof}

We could also prove a local version of the theorem. To be precise, for each subset $ U $ of $ S^{n-1} $ and a continuous $ c $ as above, we define the line-Kakeya set with directions in $ U $ to be $ \tilde{K}_{c}(U)= \bigcup_{v\in U} c(v)+\R_{\geq 0} \cdot v.$
\begin{prop}
	If $ B_r $ is a small closed ball of radius $ r $ in $ S^{n-1} $, then $  \tilde{K}_{c}(B_r) $ contains an infinite cone with the cone angle $ \gtrsim r.$
\end{prop}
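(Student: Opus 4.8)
The plan is to localize the proof of Proposition \ref{linekakeya}, replacing the sphere $S^{n-1}$ by the cap $\overline{B_r}$ and the degree argument by Brouwer's fixed point theorem. Write $v_0\in S^{n-1}$ for the center of $\overline{B_r}$ and recall the running hypotheses that $c$ is continuous with $\im(c)\subset B(0,R)$. For a point $x\in\R^n$ with $|x|>R$, consider the continuous map $f_x:\overline{B_r}\to S^{n-1}$, $f_x(v)=\frac{x-c(v)}{|x-c(v)|}$. Exactly as in Proposition \ref{linekakeya}, a fixed point $v$ of $f_x$ yields $x=c(v)+|x-c(v)|\cdot v\in \tilde K_c(\overline{B_r})$, so it suffices to produce such a fixed point for every $x$ lying in a suitable infinite cone.

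The key geometric input is that $f_x$ nearly preserves directions when $x$ is far away. If $x=Lw$ with $w\in S^{n-1}$ and $L\geq 2R$, then decomposing $c(v)$ into components parallel and perpendicular to $w$ gives $d_{S^{n-1}}(f_x(v),w)\leq \arctan\frac{R}{L-R}\lesssim R/L$, uniformly in $v\in\overline{B_r}$. Consequently, choosing a threshold $L_0\gtrsim R/r$ large enough, for every $w$ with $d_{S^{n-1}}(w,v_0)\leq r/4$ and every $L\geq L_0$ the map $f_{Lw}$ sends $\overline{B_r}$ into the strictly smaller cap $\{d_{S^{n-1}}(\cdot,v_0)\leq r/2\}\subset\overline{B_r}$. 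Since $\overline{B_r}$ is a small geodesic ball, hence homeomorphic to $\overline{B^{n-1}(0,1)}$, Brouwer's theorem supplies a fixed point, so $Lw\in\tilde K_c(\overline{B_r})$.

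It then remains to observe that the set $\{Lw:\ L\geq L_0,\ d_{S^{n-1}}(w,v_0)\leq r/4\}$, which we have just shown lies in $\tilde K_c(\overline{B_r})$, actually contains an honest infinite cone of angle $\gtrsim r$: take the cone with apex $L_0 v_0$, axis $v_0$ and half-angle $r/4$. Any point $L_0 v_0+su$ with $s\geq 0$ and $d_{S^{n-1}}(u,v_0)\leq r/4$ has norm $\geq L_0$ (because $v_0\cdot u\geq\cos(r/4)>0$, so the inner product is $\geq L_0^2$) and its direction lies within spherical distance $d_{S^{n-1}}(u,v_0)$ of $v_0$, since $L_0 v_0+su$ sits in the planar angular sector spanned by the nonnegative combinations of $v_0$ and $u$. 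This is the cone claimed in the statement.

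I expect the only real subtlety to be bookkeeping: carrying out the angle estimate carefully so that the constant in $L_0\gtrsim R/r$ lands the image of $f_{Lw}$ strictly inside $\overline{B_r}$, and checking that the ``truncated radial'' set above genuinely contains a cone with apex $L_0 v_0$ rather than merely a cone with apex at the origin minus a bounded neighborhood of it. Both points are elementary and no idea beyond Proposition \ref{linekakeya} is needed; taking $r$ as large as the smallness assumption permits recovers a statement in the spirit of that proposition.
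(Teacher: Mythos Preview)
Your argument is correct but follows a different route from the paper. The paper first extends $c$ continuously from $\overline{B_r}$ to all of $S^{n-1}$ with image still contained in a ball $B_R$, applies Proposition~\ref{linekakeya} to the extension $\tilde c$ to obtain, for every $x\notin B_R$, some $v\in S^{n-1}$ with $x=\tilde c(v)+|x-\tilde c(v)|\,v$, and then writes down the explicit cone $C=\{(\bar x,x_n):x_n-R/r>|\bar x|/r\}$ (apex at height $R/r$ on the axis through $v_0$) with the property that for $x\in C$ and any $y\in B_R$ one has $\frac{x-y}{|x-y|}\in B_r$; hence the fixed direction $v$ already lies in $B_r$, so $\tilde c(v)=c(v)$ and $x\in\tilde K_c(\overline{B_r})$. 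Your approach instead stays on the cap, shows $f_{Lw}(\overline{B_r})\subset\overline{B_r}$ for $L\gtrsim R/r$ and $w$ near $v_0$, and invokes Brouwer directly. This is marginally more elementary (Brouwer on a disk rather than the degree-zero fixed point fact on $S^{n-1}$) and avoids the Tietze-type extension, at the price of redoing the angle estimate by hand and adding the extra paragraph verifying that the truncated radial set contains a genuine cone; the paper's version is terser because it recycles Proposition~\ref{linekakeya} and produces the cone in one line.
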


\begin{proof}
    Suppose $ c(B_r) $ lies in $ B_R $($ \subset \R^n $), a ball of radius $ R $. We can find another continuous map $ \Tilde{c} $ on the whole sphere $ S^{n-1} $, such that $ \Tilde{c}(S^{n-1})\subset B_R$ and $\Tilde c|_{B_r}=c|_{B_r}$.
    Without loss of generality, we assume the center of $ B_R $($ \subset \R^n $) is 0 and the center of $ B_r $($ \subset S^{n-1} $) is the north pole $ (0,\cdots,0,1) $ of $ S^{n-1}. $ 
    
    Consider the cone $ C=\{(\bar{x},x_n)\in\R^n:x_n-\frac{R}{r}>\frac{|\bar{x}|}{r} \} $ which is  the shaded region in Figure \ref{conefig}. By Proposition \ref{linekakeya}, $ C\subset \tilde{K}_{\Tilde{c}} $. Also note that for any $ x\in C $ and $ y\in B_R $, we have $ \frac{x-y}{|x-y|}\in B_r\subset S^{n-1} $, so actually we have $ C\subset \tilde{K}_{\Tilde{c}}(B_r)=\tilde{K}_{c}(B_r).$
\end{proof}

\begin{figure}

\centering

\begin{tikzpicture}
\draw  (-1,-2) -- (1,2);
\draw (-1,2) -- (1,-2);
\draw (0,-1.25) circle (0.559); 
\node at (0,-1.25) {$B_R$};
\filldraw [color=gray!20] (0,0.04) -- (0.96,2) -- (-0.96,2) -- cycle;
\draw [ ->] (0.2,0.4) arc 
(63.434:116.566:0.447) node[above]{$r$} ;
% (63.434:116.566:0,447);
\end{tikzpicture}

\caption{}
\label{conefig}
% label is given after caption
\end{figure}

\vspace{5mm}
\noindent
{\bf Acknowledgements.} We would like to thank Larry Guth for suggesting this problem and many helpful discussions, and Nets Katz for helpful comments.

\bibliographystyle{abbrv}
\bibliography{refs}

\vspace{5mm}

\end{document}